\documentclass[11pt]{article}
\usepackage[utf8]{inputenc}
\usepackage[english]{babel}
\usepackage{amsmath}
\usepackage{amssymb}
\usepackage{float}

\author{Brian Alspach and Aditya Joshi\\
School of Information and Physical Sciences\\
University of Newcastle\\
Callaghan, NSW 2308\\
Australia\\
brian.alspach@newcastle.edu.au\\
aditya.joshi@uon.edu.au\\ 
 }

\date{\today}
\title{On the k-spanning cyclability of 4-valent Cayley graphs on Abelian groups}

\usepackage{graphicx}
\graphicspath{ {./images/} }
\usepackage{tikz}

\usepackage{mathtools}

\usepackage{amsthm}

\newtheorem{theorem}{Theorem}[section]
\newtheorem{corollary}{Corollary}[theorem]
\newtheorem{lemma}[theorem]{Lemma}

\newtheorem{definition}[theorem]{Definition}

\providecommand{\keywords}[1]
{
  \small	
  \textbf{Keywords:} #1
}

\providecommand{\msc}[1]
{
  \small	
  \textbf{MSC 2020:} #1
}

\begin{document}
\maketitle

\abstract{A graph $X$ is $k$-spanning cyclable if for any subset $S$ of $k$ distinct vertices there is a 2-factor of $X$ consisting of $k$ cycles such that each vertex in $S$ belongs to a distinct cycle. In this paper we examine the $k$-spanning cyclability of 4-valent Cayley graphs on Abelian groups.} \newline
\msc{05C70} \newline
\keywords{k-spanning cyclability; Cayley graphs; Abelian groups; 2-factor}

\section{Introduction}
Graphs consist of vertices and edges joining pairs of distinct vertices such that neither loops nor multiple edges are allowed. If $X$ is a graph, its vertex set is denoted $V(X)$ and its edge set is denoted $E(X)$. The \textit{order} of a graph $X$ is $|V(X)|$ and the size is $|E(X)|$. The number of edges incident with a given vertex $v$ is its \textit{valency} and is denoted $val(v)$. All graphs throughout this paper are connected and  finite. 

An edge joining vertices $u$ and $v$ is denoted $[u,v]$. Continuing in this manner, a path of length $t$ from $u_0$ to $u_t$ is a connected subgraph of order $t+1$ all of whose vertices have valency 2 other than $u_0$ and $u_t$ which have valency 1. The path is denoted $[u_0,u_1,...,u_t]$, where $[u_i,u_{i+1}]$ is an edge for $0 \leq i \leq t-1$. If we start with a path of length $t$ from $u_0$ to $u_t$ and add the edge $[u_0,u_t]$, we obtain a cycle of length $t+1$ and use the notation $[u_0,u_1,...,u_t,u_0]$. 

A 2-factor in a graph $X$ is a spanning subgraph of $X$ such that every vertex has valency $2$. A 2-factor $F$ of $X$ \textit{separates} a set $A$ of $k$ vertices in $V(X)$ if $F$ is composed of $k$ cycles and $A$ intersects the vertex set of each cycle in $F$ in a single vertex. 

\begin{definition}\label{D1.1}
\normalfont A graph $X$ is   \textit{$k$-spanning cyclable}  if for every $A\subseteq V(X)$ such that $|A| = k$ there is a 2-factor of $X$ separating $A$. 
\end{definition}

The preceding concept has been studied because of the general problem of embedding cycles in graphs. Lin, Tan, Hsu and Kung \cite{L1} considered $k$-spanning cyclability for $n$-cubes. Yang, Hsu, Hung and Cheng \cite{Y1} considered 2-spanning cyclability for generalized Petersen graphs. In a recent paper, Qiao, Sabir and Meng studied $k$-spanning cyclability of Cayley graphs on symmetric groups whose connection sets are transposition trees \cite{Q1}. The authors of this paper have examined the $k$-spanning cyclability of honeycomb toroidal graphs \cite{A1}. 

We now consider the $k$-spanning cyclability of 4-valent Cayley graphs on Abelian groups. It is easy to see that a regular graph of valency 4 cannot be $k$-spanning cyclable for $k\geq4$ since a cycle passing through any vertex $v$ must contain two of its adjacent vertices. Hence, 4-valent graphs are at most 3-spanning cyclable. 
Of course, a graph is Hamiltonian if and only if it is 1-spanning cyclable. It is also well known that all Cayley graphs on a finite Abelian group of order at least 3 are Hamiltonian, and hence all 4-valent Cayley graphs on Abelian groups are 1-spanning cyclable. This leaves us with the 2-spanning and 3-spanning cyclability cases. 

\section{The 3-valent case}
For a preliminary understanding of the general approach in examining the 4-valent graphs, and for more completness, we look at the 2-spanning cyclability of 3-valent Cayley graphs on Abelian groups. 

\begin{theorem}
If $X$ is a connected 3-valent Cayley graph on an Abelian group, then $X$ is 2-spanning cyclable if and only if it is isomorphic to $Q_3$ or $K_2\Box C_n$, where $n\geq 4$ 
\end{theorem}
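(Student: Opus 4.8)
The plan is to first cut the problem down to a short explicit list of graphs by classifying the possible connection sets, then establish sufficiency by exhibiting concrete $2$-factors, and finally dispose of every remaining graph, with the Möbius ladders forming the decisive and hardest family.

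First I would classify $X=\mathrm{Cay}(G,S)$ with $G$ abelian, $S=-S$ and $|S|=3$. Since the non-involutions of $S$ pair up under negation, $S$ contains an odd number of involutions, so either (a) $S=\{b_1,b_2,b_3\}$ consists of three involutions, or (b) $S=\{a,-a,b\}$ with $2b=0$ and $\mathrm{ord}(a)=m\ge 3$. In case (a), $G=\langle S\rangle$ is elementary abelian; if the $b_i$ are independent then $G\cong\mathbb{Z}_2^3$ and $X\cong Q_3$, whereas the single relation $b_3=b_1+b_2$ forces $G\cong\mathbb{Z}_2^2$ and $X\cong K_4$. In case (b), if $b\in\langle a\rangle$ then $m$ is even, $b=\tfrac{m}{2}a$, $G\cong\mathbb{Z}_m$ and $X$ is the Möbius ladder $M_m=C_m(1,\tfrac{m}{2})$; if $b\notin\langle a\rangle$ then $G\cong\mathbb{Z}_m\times\mathbb{Z}_2$ and $X\cong K_2\Box C_m$. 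Observing that $Q_3\cong K_2\Box C_4$ and $K_4\cong M_4$, it remains only to settle these families.

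For sufficiency I would treat $K_2\Box C_n$ with $n\ge 4$, writing vertices as $(i,j)$, $i\in\mathbb{Z}_2$, $j\in\mathbb{Z}_n$. Given $\{u,v\}$: if they share a column I take the two copies of $C_n$ (the two ``layers''), which separates the endpoints of any rung; if their columns differ I cut the column-cycle $\mathbb{Z}_n$ into two consecutive arcs $A_1\ni j_u$, $A_2\ni j_v$ each of size at least $2$ (possible exactly because $n\ge 4$), and over each arc run the cycle that goes along the top of the arc, down one boundary rung, back along the bottom, and up the other boundary rung. This yields a $2$-factor of two cycles separating $u$ and $v$, so each such graph (including $Q_3$) is $2$-spanning cyclable.

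For necessity, $K_4$ has too few vertices to hold two disjoint cycles, and in the triangular prism $K_2\Box C_3$ the only triangles are the two triangular faces, so a same-layer pair is never separated; both fail. The main obstacle is to show every Möbius ladder fails, and here I would fix the rung-pair $\{0,k\}$ and prove it lies in a single cycle of \emph{every} $2$-factor. Index columns by $0,\dots,k-1$, let $x_r\in\{0,1,2\}$ count the side edges used between columns $r$ and $r+1$, and let $\rho_r\in\{0,1\}$ record whether rung $r$ is used; the degree condition at column $r$ reads $x_{r-1}+x_r+2\rho_r=4$. Hence all $x_r$ share one parity: either all are odd (forcing every $x_r=1$ and every rung used) or all are even (so each $x_r\in\{0,2\}$). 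If rung $0$ is used then $0$ and $k$ are joined by it and lie in one cycle; otherwise $\rho_0=0$ gives $x_{k-1}=x_0=2$, so the seam edges and the column-$0$ side edges are all present, and following the cycle from $0$ rightward along the top to the first gap, down its rung, and back along the bottom returns to $(0,1)=k$ before the seam is ever met. Either way $\{0,k\}$ is not separated. The genuine difficulty is exactly this last step: unlike the prism, the single twist fuses the two layers into one Hamilton cycle, so the layer-$2$-factor that separates a rung in the prism is unavailable, and the parity/degree bookkeeping above is what certifies that no substitute exists. Combining the three families gives $2$-spanning cyclability precisely for $Q_3$ and $K_2\Box C_n$ with $n\ge 4$.
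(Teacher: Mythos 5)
Your proof is correct, and its overall skeleton matches the paper's: the same classification of connection sets (three involutions versus one), the same resulting list of graphs ($Q_3$, $K_4$, the M\"obius ladders $\mathrm{circ}(n;\pm 1,n/2)$, and $K_2\Box C_n$), and even the same choice of the antipodal rung pair $\{u_0,u_{n/2}\}$ as the witness that the M\"obius ladder fails. Where you genuinely diverge is in how that decisive case is certified. The paper observes that a separating 2-factor would have to contain the 2-paths $[u_{n-1},u_0,u_1]$ and $[u_{n/2-1},u_{n/2},u_{n/2+1}]$ and then asserts one sees ``immediately'' that no diameter edges can occur, whence the 2-factor is the outer Hamilton cycle, a contradiction. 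That inference does not follow from the forced 2-paths alone: for $n=8$ the 2-factor $[u_0,u_1,u_5,u_4,u_3,u_2,u_6,u_7,u_0]$ contains both forced 2-paths together with the diameter edges $[u_1,u_5]$ and $[u_2,u_6]$ (being a single Hamilton cycle it separates nothing, but it shows the forced paths by themselves do not exclude diameter edges). Your column-by-column bookkeeping $x_{r-1}+x_r+2\rho_r=4$, the resulting parity dichotomy, and the trace from $u_0$ to the first ``gap'' rung prove the stronger and fully correct statement that $u_0$ and $u_{n/2}$ lie on a common cycle of \emph{every} 2-factor; this is exactly the argument the paper's ``immediately'' stands in for, so your route is the more rigorous one. (One cosmetic remark: the trace presupposes a gap exists; if every $x_r=2$ the 2-factor is the outer Hamilton cycle and the conclusion is trivial, so nothing is lost.) The remaining differences are economies rather than divergences: you make the paper's ``trivial to establish'' sufficiency claim for $K_2\Box C_n$, $n\geq 4$, explicit with the two-arc construction, and you fold $Q_3$ and $K_4$ into the infinite families via $Q_3\cong K_2\Box C_4$ and $K_4\cong M_4$ instead of treating them ad hoc.
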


\begin{proof}
We consider two cases of the connection set $S$. The first is when $S$ contains three involutions and the other is when it contains exactly one. Let the connection set be $S = \{ a,b,c\}$ where all three elements are involutions. In this case there are two possible graphs. The first is when one of the involutions in $S$ is equal to the sum of the other two involutions. In this case the Cayley graph is isomorphic to $K_4$. However, in order to separate two vertices we must have two cycles each of length at least 3, and hence at least 6 vertices in the graph. Hence $K_4$ is not 2-spanning cyclable. When none of the involutions in $S$ are the sum of the other two we have the the 3-dimensional cube $Q_3$. It is also easy to see that it is 2-spanning cyclable. 

We now consider the second case when
$S=\{a,\pm s\}$ contains a single involution $a$.  By considering the subgraph generated by $\pm s$, it is easy to see that $X$ is isomorphic to either the circulant graph (defined in section 3) with connection set $\{\pm 1,n/2\}$, $n$ even, or the graph $K_2\Box C_n$, where $n=\mathrm{ord}(s)$.

First consider the circulant graph of even order $n$ with connection set $\{\pm 1,n/2\}$.  If there is a 2-factor with two cycles separating 0 and $n/2$, then one cycle must contain the path $[n-1,0,1]$ and the other cycle must contain the path $[n/2-1,n/2,n/2+1]$.  We see immediately that the 2-factor can contain no diameter edges.  Hence, there is no 2-factor separating 0 and $n/2$.

Now consider $K_2\Box C_n$. This graph has order 6 when $n=3$ and the only 3-cycles in the graph are the two vertex-disjoint 3-cycles $T_1$ and $T_2$ forming the product. Thus, there is no 2-factor separating two vertices of $T_1$ because there are only six vertices available. On the other hand, when $n\geq 4$, it is trivial to establish that $K_2\Box C_n$ is 2-spanning cyclable.  

\end{proof}

As reflected in the above proof the approach taken to examine the 4-valent Cayley graphs on Abelian groups is to first look at the possible structures of the graph, determined by different cases of the connection set, and then to examine the $k$-spanning cyclability of each of these structures. We begin by looking at the possible graph structures in the next section. 

\section{Graph structure}
In this section we describe the possible structures of the Cayley graphs in question. We will then examine the spanning cyclability of these structures in the subsequent sections.
  
We first define two classes of graphs which will help describe the structures of the Cayley graphs in question. Let $P_m$ denote the path of order $m$ and length $m-1$ and let $C_n$ denote the cycle of order $n$.  The {\it pseudo-Cartesian product} of 
$C_m$ and $C_n$, $m,n\geq 3$, with {\it jump $\ell$} is obtained by starting with the Cartesian
product $P_m \Box C_n$ and adding the edges from $u_{m-1,j}$ to $u_{0,j+\ell}$, where the second 
coordinates are computed modulo $n$.  The latter edges are said to have jump $\ell$.  The notation for
this product is $C_m \Box_\ell  C_n$. Of course, when $\ell=0$, the pseudo-Cartesian product is just the
ordinary Cartesian product. The second class of graphs are called \textit{circulant} graphs and are defined to be Cayley graphs on a cyclic group. 

There are four cases for the connection set $S$ of a 4-valent Cayley graph on an Abelian group $G$. The first is when $S$ contains four involutions. The second is when $S$ consists of two involutions and one element of order greater than 2 and its inverse. The third case is when $S$ has no involutions and contains at least one element whose order is greater than 2 but less than $|G|$. Finally, the fourth case is when $S$ contains two elements of order $|G|$ and their inverses. We now examine the structure of the Cayley graphs in each of these cases. 

When $S$ contains four involutions, there are only two types of Cayley graphs which can be constructed. It is easy to verify that all such graphs are 2-spanning cyclable. The first type is a Cayley graph of order 8 but since a 3-spanning cyclable graph requires there to be three disjoint cycles with each cycle containing at least three vertices, the graph must necessarily contain at least nine vertices and so a Cayley graph of order 8 is not 3-spanning cyclable. The other case is when the Cayley graph is isomorphic to the 4-dimensional cube $Q_4$ and it is easily verifiable that the graph is 3-spanning cyclable \cite{L1}.

For the second case the connection set is $S =\{a,b,\pm s \}$, where $a$ and $b$ are involutions and $s$ has order $r > 2$, for which there are a couple of possibilities based on the fact a cyclic group has no elements of order 2 when it has odd order and a unique element of order 2 when it has even order. Let $H$ be the cyclic subgroup of order $r$ generated by $s$.  If $H$ has odd order, then the graph is $C_4\Box C_r$.  Similarly, if none of $a$, $b$ or $a+b$ belongs to $H$, then the graph is again $C_4\Box C_r$. If $a+b$ belongs to $H$, then the graph is
isomorphic to the pseudo-Cartesian product $C_r\Box_2 C_4$. Finally, if either $a$ or $b$ belong to $H$, then the graph is not a pseudo-
Cartesian product and is isomorphic to $Y\Box K_2$, where $Y$ is the circulant
graph of order $r$ with connection set $\{\pm 1,r/2\}$. This graph needs to be
checked separately.

The third case is similar to the second. Assume that the connection set is $S = \{\pm a, \pm b: a \neq -a, b \neq -b \}$. Without loss of generality we may assume that $ord(a) <|G|$. The orbit of $e$ under the cyclic group generated by $a$ forms an $ord(a)$-cycle. Similar to the reasoning of the second case we can use $b$ to connect the vertices in this cycle to other cycles of length $ord(a)$. We can see that the resulting graph is a pseudo-Cartesian product of the cycles $C_{ord(a)}$ and $C_m$, where $m \geq 3$. 

Finally, for the final case we have $S = \{\pm a, \pm b: ord(a) = ord(b) = |G|\}$. We can see that every element of $G$ can be represented as the power of the element $a$, and hence $G$ is a cyclic group. Since $b$ also generates $G$, and is not equal to $a$, we have that $b = ka$ where $gcd(k, |G|) =1$. Such a graph is a circulant graph and is isomorphic to the Cayley graph on the additive group $\mathbb{Z}/n\mathbb{Z}$ with connection set $S = \{\pm 1, \pm s\}$, which is denoted by $\mathrm{circ(n; \pm 1, \pm s)}$, where $gcd(s,n)=1$.

In summary the three classes of graphs that remain to be examined are $C_m \Box_\ell C_n$, where $m,n \geq 3$, $Y\Box K_2$, where $Y$ is the circulant graph of even order $n$ with connection set $\{\pm 1,n/2\}$, and $\mathrm{circ(n; \pm 1, \pm s)}$, where $gcd(n,s)=1$. These graphs are examined in the following three subsequent sections.

\section{The product of cycles case}

We first note that there are three cases for the 3-spanning cyclability of the pseudo-Cartesian product of two cycles. The first is when all three vertices are in distinct columns, the second is when two vertices are in the same column and the other is in some other column, and finally the third case is when all three vertices are in the same column. Using the automorphism which maps a vertex to the next column we may assume that one of the three vertices belongs to the first column in the first case. We may also assume that the two vertices that are in the same column belong to the first column for the second case. Finally, all three vertices can be in the first column for the third case. The 2-spanning cyclability is similar to and simpler than the above. 

We use a constructive technique throughout this section which we now describe in detail for
one example. We then leave it to the reader to apply the technique in all other situations.
The basic idea is the following. We start with a given 2-factor composed of two or three
cycles for small values of $m$ and $n$. We then insert a certain number of rows and/or columns
obtaining a 2-factor composed of the same number of cycles for other values of $m$ and $n$.

We use Figure 1 for the description. The figure shows a 2-factor in the graph $P_3\Box C_4$.
Note that this 2-factor separates both $u_{0,0}$ and $u_{1,0}$ from any vertex of the form
$u_{i,1}$, that is, any vertex in the 1-row. If you now subdivide each horizontal edge from
the 0-column to the 1-column $r$ times, we obtain a 2-factor with two cycles in $P_{r+3}\Box C_4$ separating every vertex of the 0-row, except $u_{r+2,0}$, from every vertex of the 1-row.  We
refer to this as the ability to insert an arbitrary number of columns.

Note that all three columns have an edge from the 0-row to the 3-row. These edges may be be
subdivided to insert an arbitrary number of rows without increasing the number of cycles in
the resulting 2-factor. This is what we refer to as inserting an arbitrary number of rows.

There are occasions when we cannot insert an arbitrary number of rows or columns, but we may
always insert any even number of rows or columns.  For example, consider the edge $[u_{0,1},u_{0,2}]$.  Subdivide this edge into the 3-path $[u_{0,1},u_{0,a},u_{0,b},u_{0,2}]$. Then
replace the edge $[u_{0,a},u_{0,b}]$ with the path $[u_{0,a},u_{1,a},u_{2,a},u_{2,b},
u_{1,b},u_{0,b}],$ where the obvious new vertices have been created. Then relabel the
rows with subscripts from 0 to 5. It is clear that we may repeat this an arbitrary number of
times, and that we may do it to produce an even number of columns.

We conclude with a particular example starting with Figure.  Suppose we want a 2-factor
in $P_9\Box C_7$ separating $u_{0,0}$ and $u_{0,4}$.  Start with Figure 1 and subdivide each edge from the 0-row to the 3-row once.  Let the second subscript of each of these new vertices be 4. Now insert two new rows between $u_{0,1}$ and $u_{0,2}$. The resulting 2-factor
separates $u_{0,0}$ and $u_{0,4}$ as required.

\begin{figure}[H]
\centering

\begin{picture}(200,120)(-80,-50)

\multiput(0,0)(20,0){3}{\multiput(0,0)(0,20){4}{\circle*{5}}}

\multiput(0,20)(0,100){1}{\line(0,1){20}}
\multiput(40,0)(0,100){1}{\line(0,1){20}}
\multiput(40,40)(0,100){1}{\line(0,1){20}}

\multiput(0,0)(0,100){1}{\line(1,0){20}}
\multiput(0,20)(0,100){1}{\line(1,0){40}}
\multiput(0,40)(0,100){1}{\line(1,0){40}}
\multiput(0,60)(0,100){1}{\line(1,0){20}}

\qbezier(0,0)(-15,30)(0,60)
\qbezier(20,0)(5,30)(20,60)
\qbezier(40,0)(55,30)(40,60)

\put(0,-35){{\sc Figure 1.} }
\end{picture} \newline

\end{figure}

\begin{lemma}\label{L3.1} 
Two vertices in the same column of $P_m \Box C_n$, $m\geq3$ and $n\geq4$ even,  have a 2-factor separating them.
\end{lemma}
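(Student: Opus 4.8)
The plan is to cut the cyclic (row) direction into two ``bands'' of consecutive rows, one band containing each of the two prescribed vertices, and to realize each band as a grid graph carrying a Hamilton cycle; the union of the two Hamilton cycles is then a 2-factor with exactly two cycles that separates the two vertices. Here I use that a column of $P_m\Box C_n$, namely $\{u_{c,j}:0\le j\le n-1\}$ for a fixed $c$, induces a copy of $C_n$, so two vertices in the same column are two vertices of this cycle lying in distinct rows.

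First I would normalize the configuration. Rotating the rows by the automorphism $u_{i,j}\mapsto u_{i,j+1}$, and if necessary reflecting them by $u_{i,j}\mapsto u_{i,-j}$, I may assume the two chosen vertices are $u_{c,0}$ and $u_{c,b}$ with $1\le b\le n/2$. I then partition the $n$ rows into the band $B=\{b,b+1\}$ of two consecutive rows and the band $A=\{b+2,\dots,n-1,0,\dots,b-1\}$ of the remaining $n-2$ rows. Since $1\le b\le n/2$ and $n\ge 4$, the index $b+1$ is neither $0$ nor larger than $n-1$, so $B$ contains $u_{c,b}$ but not $u_{c,0}$ while $A$ contains $u_{c,0}$ but not $u_{c,b}$. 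Deleting two adjacent vertices from the row-cycle $C_n$ leaves a path $P_{n-2}$, so the subgraph induced by $A$ is the grid $P_m\Box P_{n-2}$ and the subgraph induced by $B$ is the grid $P_m\Box P_2$ (a ladder).

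Next I would invoke the standard fact that $P_a\Box P_b$ has a Hamilton cycle whenever $a,b\ge 2$ and $ab$ is even. Because $n$ is even, $n-2$ is even, so band $A$ has even order $m(n-2)$ and is Hamiltonian, and band $B$ has order $2m$ and is likewise Hamiltonian; both conclusions hold regardless of the parity of $m$. Choosing a Hamilton cycle in each band yields a spanning subgraph in which every vertex has valency $2$, consisting of exactly two cycles — one through $u_{c,0}$ and one through $u_{c,b}$ — which is precisely a 2-factor separating the two vertices.

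The step that must be handled with care, and the only place where the argument can fail, is the choice of the split. Cutting the row-cycle directly at the two marked rows can leave a band of a single row (when $b=1$) or a band of odd order (when $m$ and the band height are both odd), and neither is Hamiltonian. Forcing the short band to be the two consecutive rows $\{b,b+1\}$ makes both bands even in order simultaneously, which removes every parity obstruction at once; recognizing this is the key point. The same result could alternatively be obtained from a single small base case by the row- and column-insertion technique described above, but the two-band construction settles all $m\ge 3$ and even $n\ge 4$ directly.
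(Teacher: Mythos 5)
Your proof is correct, and it takes a genuinely different route from the paper's. The paper proves Lemma \ref{L3.1} by the insertion technique: it starts from the explicit two-cycle 2-factor of $P_3\Box C_4$ in Figure 1, rotates vertically to normalize the two vertices, subdivides edges to insert columns and (even numbers of) rows, and flips the configuration front-to-back to reach the last column. You instead cut the cyclic direction into two bands of consecutive rows --- the ladder $P_m\Box P_2$ on rows $\{b,b+1\}$ containing $u_{c,b}$, and the grid $P_m\Box P_{n-2}$ on the remaining rows containing $u_{c,0}$ --- and take a Hamilton cycle in each, invoking the standard fact that $P_a\Box P_b$ with $a,b\ge 2$ and $ab$ even is Hamiltonian; since $n$ is even, both band heights $2$ and $n-2$ are even, so no parity obstruction arises whatever the parity of $m$, and your choice of $\{b,b+1\}$ (rather than cutting at the two marked rows) is exactly the right move to guarantee this. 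Your argument is cleaner in several respects: it is column-agnostic (no special treatment of the last column), needs no figures, and in fact mirrors the paper's own proof of Theorem \ref{T3.4}, which splits the \emph{path} direction into Hamiltonian pieces --- you do the analogous split in the \emph{cycle} direction, which is available precisely because the second factor is a cycle and the lemma places both vertices in one column. What the paper's approach buys instead is the row/column-insertion machinery itself, which is the workhorse reused throughout the rest of the paper (Lemma \ref{L3.2}, Theorem \ref{T3.8}), including for pseudo-Cartesian products $C_m\Box_\ell C_n$ where the jump $\ell$ twists the wrap-around edges and a clean band decomposition of the kind you use is no longer available; proving this first lemma by insertion doubles as a demonstration of that technique.
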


\begin{proof}
Consider Figure 1. We may rotate the graph vertically so that the vertices are either in the top two rows or the top row and the row two below it. Then inserting columns allows us to have the two vertices in any of the columns $0,1,...,m-2$. 
It is straightforward to see that we can add any even number of rows between any two rows and extend the 2-factor appropriately. Doing so will give a 2-factor separating any two vertices in all but the last column. To separate vertices in the last column we simply flip our extended 2-factor front to back. This completes the proof.
\end{proof}

\begin{figure}[H]
\centering

\begin{picture}(200,100)(-80,-20)

\multiput(-50,0)(20,0){3}{\multiput(0,0)(0,20){5}{\circle*{5}}}

\multiput(-50,0)(-50,100){1}{\line(0,1){20}}
\multiput(-50,40)(-50,100){1}{\line(0,1){20}}

\multiput(-30,0)(-50,100){1}{\line(0,1){20}}

\multiput(-10,0)(-50,100){1}{\line(0,1){40}}

\multiput(-10,60)(-50,100){1}{\line(0,1){20}}

\multiput(-50,20)(-50,100){1}{\line(1,0){20}}
\multiput(-50,40)(-50,100){1}{\line(1,0){40}}
\multiput(-50,60)(-50,100){1}{\line(1,0){40}}
\multiput(-50,80)(-50,100){1}{\line(1,0){20}}

\qbezier(-50,0)(-65,40)(-50,80)
\qbezier(-30,0)(-15,40)(-30,80)
\qbezier(-10,0)(5,40)(-10,80)

\multiput(50,0)(20,0){4}{\multiput(0,0)(0,20){5}{\circle*{5}}}

\multiput(50,0)(50,100){1}{\line(0,1){20}}
\multiput(50,40)(50,100){1}{\line(0,1){20}}

\multiput(70,0)(50,100){1}{\line(0,1){20}}

\multiput(90,0)(50,100){1}{\line(0,1){40}}

\multiput(90,60)(50,100){1}{\line(0,1){20}}

\multiput(50,20)(50,100){1}{\line(1,0){20}}
\multiput(50,40)(50,100){1}{\line(1,0){40}}
\multiput(50,60)(50,100){1}{\line(1,0){40}}
\multiput(50,80)(50,100){1}{\line(1,0){20}}

\multiput(90,0)(110,0){1}{\line(1,0){20}}
\multiput(90,80)(110,80){1}{\line(1,0){20}}
\multiput(110,0)(110,80){1}{\line(0,1){80}}

\qbezier(50,0)(35,40)(50,80)
\qbezier(70,0)(85,40)(70,80)

\put(0,-35){{\sc Figure 2.} }
\end{picture} \newline

\end{figure}

The next result extends Lemma \ref{L3.1} to $n$ odd.

\begin{lemma}\label{L3.2} 
Two vertices in the same column of $P_m \Box C_n$, $m\geq3$ and odd $n\geq5$, can be separated by a 2-factor. 
\end{lemma}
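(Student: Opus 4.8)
The plan is to prove Lemma~\ref{L3.2} by the same constructive device used for Lemma~\ref{L3.1} and described in the preamble of this section, but anchored at an odd base width. Since the two marked vertices lie in a common column, which is a copy of $C_n$, I would first apply the vertical rotation (the automorphism of $C_n$ acting on the second coordinate) to move them into canonical position, so that they occupy either the top two rows (separation distance $1$) or the top row together with the row two below it (distance $2$). I would also record the parity of $m$, since the path direction admits no translation and the two starting figures will differ according to whether $m$ is odd or even.

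The base step is to exhibit explicit 2-factors with exactly two cycles separating the marked pair in the two smallest odd-width cases, namely $P_3 \Box C_5$ and $P_4 \Box C_5$; these are the two configurations drawn in Figure 2. For each of the two $m$-parities one checks both the distance-$1$ and the distance-$2$ placements obtained from the rotation above. The genuinely new feature relative to the even case is that $C_n$ with $n$ odd makes $P_m \Box C_n$ non-bipartite, so at least one of the two cycles in every such 2-factor must have odd length; the base figures must therefore be arranged to carry an odd cycle while still separating the marked vertices.

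With the bases in hand, I would extend to arbitrary odd $n \ge 5$ and arbitrary $m \ge 3$ by insertion. To reach a prescribed odd $n$ I insert an even number of rows (so that $5$ plus an even number stays odd), using the even-row gadget described in the section preamble; placing these insertions inside the arc between the two marked vertices increases their separation by an even amount, while placing them in the complementary arc increases $n$ with the separation fixed. Starting from the distance-$1$ base this reaches every odd separation and from the distance-$2$ base every even separation, so all admissible distances $d$ are covered. Likewise I insert even numbers of columns on either side of the marked column, using the $m=3$ figure for odd $m$ and the $m=4$ figure for even $m$, which both enlarges $m$ and positions the marked column wherever required. As in Lemma~\ref{L3.1}, the one boundary situation in which the marked pair is forced against the last column is resolved by reflecting the extended 2-factor front to back. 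Each insertion only subdivides edges already present in the base 2-factor and reroutes them locally, so the number of cycles remains two and the separation is preserved throughout.

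The main obstacle is entirely concentrated in the odd base case. I expect the routine, mechanical part to be the insertions, exactly as for the even case; the real work is to produce base 2-factors on $P_3 \Box C_5$ and $P_4 \Box C_5$ that simultaneously separate the marked pair for both separation distances and both parities of $m$, contain precisely the vertical edges (for even-row insertion) and horizontal edges (for even-column insertion) needed so that no insertion ever merges the two cycles, and necessarily contain an odd cycle forced by non-bipartiteness. Balancing these requirements in such a small graph, so that the downstream extension becomes purely formal, is the step I would expect to demand the most care.
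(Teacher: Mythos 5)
Your proposal follows essentially the same route as the paper's proof: the two Figure~2 bases on $P_3\Box C_5$ and $P_4\Box C_5$, rotation of the marked pair into the top two rows or the top row and the row two below it, insertion of even numbers of rows and columns to reach arbitrary odd $n\geq 5$ and arbitrary $m\geq 3$, and a left--right flip to cover the last columns. The only inaccuracy is your side remark that non-bipartiteness forces an odd cycle in every such 2-factor --- that inference is false in general (a non-bipartite graph can have a 2-factor consisting only of even cycles; an odd cycle is forced only when $mn$ is odd) --- but nothing in your argument actually depends on it.
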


\begin{proof}
Consider Figure 2. Similar to the proof of Lemma \ref{L3.1} we may assume that the two vertices to separate are in the top two rows or the top row and the row two below it. We may insert an arbitrary number of even columns between the 0-column and the 1-column in both figures. This gives us 2-factors that separate vertices of the top row from vertices in either of the next two rows below for columns $0,1,2,\ldots,m-3$. We then may insert an arbitrary even number of rows between the top row and the next row below to separate a vertex from the top row and a vertex an arbitrary distance below it. (Note that we may restrict this distance to $n/2$.) Finally, we may insert an arbitrary even number of rows between the bottom two rows to achieve the desired value of $n$. The preceding works for columns $0,1,\ldots,m-3$.  To cover columns $m-2$ and $m-1$, we interchange left and right. This completes the proof.
\end{proof}

\begin{lemma}\label{L3.3} 
$P_m \Box C_n$ is Hamiltonian for all $m\geq 1$ and $n \geq 3$.
\end{lemma}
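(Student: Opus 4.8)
The plan is to induct on $m$, the length of the path factor, keeping $n\ge 3$ fixed. Throughout I would use the coordinates $u_{i,j}$ with $0\le i\le m-1$ the path coordinate and $j\in\mathbb{Z}/n\mathbb{Z}$ the cycle coordinate, so that each \emph{column} $\{u_{i,j}:j\in\mathbb{Z}/n\mathbb{Z}\}$ spans a copy of $C_n$ and consecutive columns $i,i+1$ are joined by the \emph{bridge} edges $[u_{i,j},u_{i+1,j}]$. The base case $m=1$ is immediate, since $P_1\Box C_n$ is just $C_n$, which is its own Hamiltonian cycle. (The case $m=2$, the prism over $C_n$, could serve as an equally easy base if one prefers.)

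For the inductive step I would assume a Hamiltonian cycle $H$ exists in $P_m\Box C_n$ and build one in $P_{m+1}\Box C_n$, where the latter is obtained by attaching a new column (column $m$, another copy of $C_n$) to column $m-1$ via bridge edges. The idea is to splice the new column into $H$ by rerouting a single edge. Suppose $H$ uses some column edge $[u_{m-1,j},u_{m-1,j+1}]$ of the terminal column. Deleting it turns $H$ into a Hamiltonian path of $P_m\Box C_n$ with endpoints $u_{m-1,j}$ and $u_{m-1,j+1}$, and I reconnect these endpoints through the new column by the detour $[u_{m-1,j},u_{m,j},u_{m,j-1},\ldots,u_{m,j+2},u_{m,j+1},u_{m-1,j+1}]$. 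Here the middle portion is the Hamiltonian path of the new $C_n$ running the long way between the adjacent vertices $u_{m,j}$ and $u_{m,j+1}$, so it visits all $n$ new vertices exactly once, and the two end steps are bridge edges. The result is a single Hamiltonian cycle of $P_{m+1}\Box C_n$.

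The one step needing justification — and the place where I expect the (modest) real content to sit — is the claim that $H$ always uses at least one column edge of the terminal column. I would argue this directly from the degrees. Every vertex $u_{m-1,j}$ has degree $3$, with exactly one bridge edge (to $u_{m-2,j}$) and two column edges. Since $H$ meets each such vertex in exactly two edges while only one bridge is available there, no terminal vertex can be incident in $H$ to two bridges; hence each terminal vertex is incident to at least one column edge, and in particular a column edge appears in $H$. The pleasant feature of this argument is that it is completely insensitive to the parity of $n$, which is exactly what lets a single induction dispose of all $n\ge 3$ at once and sidesteps the even/odd case split that a direct serpentine construction (traversing the rows alternately and closing through a wrap-around edge) would otherwise force.
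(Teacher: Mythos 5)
Your proof is correct and complete. Note, however, that the paper offers no argument at all for this lemma --- its ``proof'' is literally the phrase ``Easy to see'' --- so there is nothing to compare against except whatever direct construction the authors had in mind (presumably a serpentine traversal of the rows, closed through a wrap-around column edge). Your induction on $m$ is a genuine proof of the omitted content: the base case $P_1\Box C_n=C_n$ is trivial, the degree argument correctly shows that a Hamiltonian cycle of $P_m\Box C_n$ must use a column edge in the terminal column (each terminal vertex has only one bridge edge available, so it cannot be matched twice by bridges), and the splice --- replacing that edge by the long Hamiltonian path around the newly attached copy of $C_n$, entered and exited by the two bridge edges at its endpoints --- visibly yields a Hamiltonian cycle of $P_{m+1}\Box C_n$. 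As you point out, the advantage over the direct serpentine construction is that your argument is insensitive to the parity of $n$, whereas the serpentine path ends on the wrong side of the cylinder when $n$ is odd and needs a separate fix. One cosmetic remark: your degree count (``degree 3, one bridge, two column edges'') presupposes $m\ge 2$; for the step from $m=1$ to $m=2$ the terminal column is the whole graph and every edge of $H$ is a column edge, so the needed conclusion holds there trivially, but it is worth saying so explicitly.
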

\begin{proof}
Easy to see
\end{proof}

\begin{theorem}\label{T3.4}
$P_m \Box C_n$ is 2-spanning cyclable for all $m \geq 3$ and $n\geq4$. 
\end{theorem}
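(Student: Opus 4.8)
The plan is to reduce the statement to the three lemmas already established. Fix a set $A=\{u,v\}$ of two distinct vertices of $X=P_m\Box C_n$, written $u=u_{a,b}$ and $v=u_{c,d}$, where the first coordinate indexes the $m$ columns and the second indexes position on the cycle $C_n$; thus $u$ and $v$ lie in the same column exactly when $a=c$. There are two exhaustive cases, $a=c$ and $a\neq c$, which I would treat separately.

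For the same-column case ($a=c$, $b\neq d$) there is nothing new to do: Lemma~\ref{L3.1} furnishes a 2-factor separating $u$ and $v$ whenever $n$ is even, and Lemma~\ref{L3.2} does so whenever $n$ is odd. Since $n\geq 4$, every admissible even value is covered by the first lemma and every admissible odd value ($n\geq 5$) by the second, so this case is complete.

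For the different-column case I would split $X$ into two vertical bands. Assume without loss of generality that $a<c$, and cut between column $a$ and column $a+1$. The columns $0,1,\dots,a$ induce a copy of $P_{a+1}\Box C_n$ and the columns $a+1,\dots,m-1$ induce a copy of $P_{m-a-1}\Box C_n$, and these two induced subgraphs partition $V(X)$. Because $a<c\leq m-1$, each band contains at least one column, so each has the form $P_k\Box C_n$ with $k\geq 1$ and $n\geq 3$, and hence possesses a Hamilton cycle by Lemma~\ref{L3.3}. The union of these two Hamilton cycles is a spanning subgraph of $X$ in which every vertex has valency $2$ and which consists of exactly two vertex-disjoint cycles; since $u$ lies in the left band and $v$ lies in the right band, these cycles separate $A$. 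Together with the previous paragraph this proves the theorem.

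The substantive difficulty of the result is already absorbed into Lemmas~\ref{L3.1} and \ref{L3.2}, whose proofs use the constructive row- and column-insertion technique; by contrast the different-column case reduces cleanly to the Hamiltonicity of two smaller products. The only points needing attention are verifying that the case distinction is exhaustive and that both bands are nonempty—both guaranteed precisely because $u$ and $v$ occupy distinct columns—and noting that the degenerate band $P_1\Box C_n=C_n$ is itself a cycle and so causes no trouble.
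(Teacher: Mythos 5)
Your proposal is correct and follows essentially the same route as the paper: the same-column case is dispatched by Lemmas~\ref{L3.1} and \ref{L3.2}, and the different-column case is handled by cutting the cylinder between the two columns into two induced bands $P_{a+1}\Box C_n$ and $P_{m-a-1}\Box C_n$, whose Hamilton cycles (Lemma~\ref{L3.3}) form the separating 2-factor. Your write-up merely makes explicit a few details the paper leaves implicit, such as the nonemptiness of both bands and the degenerate band $P_1\Box C_n=C_n$.
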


\begin{proof}
Lemmas \ref{L3.1} and \ref{L3.2} cover the case when the two vertices are in the same column. If the two vertices are in columns $i$ and $j$, where $i <j$, then we do the following. Look at the subgraph induced on columns $0,1,...,i$ and the subgraph induced on columns $i+1,i+2,...,m-1$. The former is isomorphic to $P_{i+1} \Box C_n$ and the latter is isomorphic to $P_{m-i-1} \Box C_n$. Both of them are Hamiltonian by Lemma \ref{L3.3} yielding a 2-factor separating the two vertices.  
\end{proof}

\begin{corollary}\label{C3.4.1}
$C_m \Box_\ell C_n$ is 2-spanning cyclable for all $m\geq 3$ and $n\geq 3$. 
\end{corollary}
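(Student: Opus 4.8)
The plan is to reduce the statement to Theorem~\ref{T3.4} wherever possible and to treat only the genuinely new case directly. The key observation is that $P_m\Box C_n$ is a spanning subgraph of $C_m\Box_\ell C_n$, so every $2$-factor of $P_m\Box C_n$ is also a $2$-factor of $C_m\Box_\ell C_n$ separating the same pair of vertices. Consequently, for $n\geq 4$ the corollary is immediate from Theorem~\ref{T3.4}. Moreover, when the two chosen vertices lie in different columns the splitting argument used in the proof of Theorem~\ref{T3.4} applies for every $n\geq 3$, since it invokes only the Hamiltonicity of $P_m\Box C_n$ from Lemma~\ref{L3.3}: cut the graph into the subgraphs induced on columns $0,\dots,i$ and on columns $i+1,\dots,m-1$, take a Hamilton cycle in each, and obtain a $2$-factor with two cycles separating the two vertices. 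Hence the only outstanding case is $n=3$ with both vertices in the same column.

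First I would record why this last case cannot be settled inside the subgraph $P_m\Box C_3$: if the two vertices lie in an end column, say column $0$, then in $P_m\Box C_3$ the vertex $u_{0,0}$ has only the neighbours $u_{0,1},u_{0,2},u_{1,0}$, so avoiding $u_{0,1}$ forces $u_{0,0}$ to use the edges to $u_{0,2}$ and $u_{1,0}$; then $u_{0,2}$ lies on $u_{0,0}$'s cycle and $u_{0,1}$ retains only the single neighbour $u_{1,1}$, which is impossible. Thus the wrap-around edges of $C_m\Box_\ell C_3$ are essential. Using the column-shifting and row-rotating automorphisms I may assume the two vertices to be separated are $u_{0,0}$ and $u_{0,1}$, and since the reflection $u_{i,j}\mapsto u_{i,-j}$ gives $C_m\Box_\ell C_3\cong C_m\Box_{-\ell}C_3$, it suffices to treat the two jumps $\ell=0$ and $\ell=1$.

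For $\ell=0$ the graph is the Cartesian product $C_m\Box C_3$, and I would take the cycle $[u_{0,1},u_{1,1},\dots,u_{m-1,1},u_{0,1}]$ formed by row $1$ as the cycle containing $u_{0,1}$, together with a Hamilton cycle of the prism induced on rows $0$ and $2$ as the cycle containing $u_{0,0}$; these two cycles form the required $2$-factor. For $\ell=1$ I would use the $4$-cycle $[u_{0,1},u_{m-1,0},u_{m-1,1},u_{0,2},u_{0,1}]$, built from the two wrap edges $[u_{m-1,0},u_{0,1}]$ and $[u_{m-1,1},u_{0,2}]$ and two triangle edges, as the cycle containing $u_{0,1}$. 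The companion cycle must then be a single cycle through the remaining $3m-4$ vertices, namely $u_{0,0}$, all of columns $1,\dots,m-2$, and $u_{m-1,2}$; here $u_{0,0}$ is joined to $u_{1,0}$ and, via a wrap edge, to $u_{m-1,2}$, while $u_{m-1,2}$ is joined to $u_{m-2,2}$, so the task reduces to finding a Hamilton path of the middle block $P_{m-2}\Box C_3$ from $u_{1,0}$ to $u_{m-2,2}$.

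The main obstacle, and the only part needing real care, is this last Hamilton path, because its endpoints are pinned to prescribed rows and the construction must succeed for every $m\geq 3$ irrespective of parity. I would handle it by traversing each middle column through all three of its vertices and recording only the row at which the path enters and leaves that column; since on entering a triangle at a given row one may leave by either of the other two rows, the admissible row-sequences are exactly the walks on the triangle $K_3$ (on the three rows) that start at row $0$ and finish at row $2$. Because $K_3$ is connected and non-bipartite, such a walk of any length $m-2\geq 1$ exists, which yields the desired Hamilton path and completes the $\ell=1$ construction, and with it the corollary.
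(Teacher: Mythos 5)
Your proof is correct, and it takes a genuinely different (and more careful) route than the paper. The paper's own proof is two sentences: it declares the case $m=n=3$ easy and left to the reader, and asserts that ``the remaining cases are covered by Theorem~\ref{T3.4}.'' That reduction is exactly your first step -- $P_m\Box C_n$ is a spanning subgraph of $C_m\Box_\ell C_n$ -- but, as you correctly observe, it only works for $n\geq 4$, since Theorem~\ref{T3.4} requires $n\geq 4$. The case $n=3$, $m\geq 4$ is not actually covered by the paper's argument: for $\ell\neq 0$ the graph $C_m\Box_\ell C_3$ (for instance $C_4\Box_1 C_3\cong\mathrm{circ}(12;\pm 1,\pm 4)$, whose only $4$-cycles are the sets $\{i,i+1,i+4,i+5\}$, no three of which are disjoint and spanning) contains no spanning copy of any $P_{m'}\Box C_{n'}$ with $n'\geq 4$, nor is it isomorphic to a pseudo-product $C_3\Box_{\ell'}C_m$, so neither Theorem~\ref{T3.4} nor a row/column swap applies. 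Your treatment fills precisely this hole: the distinct-column case via the splitting argument of Theorem~\ref{T3.4} (which needs only Lemma~\ref{L3.3} and hence is valid for $n=3$ and avoids all wrap edges, so it works for every $\ell$), the symmetry reductions via the shift, rotation and reflection maps (all of which are indeed automorphisms/isomorphisms of the pseudo-product), the prism construction for $\ell=0$, and the wrap-edge $4$-cycle plus Hamilton path for $\ell=1$. The only cosmetic weakness is your appeal to ``connected and non-bipartite'' for the $K_3$-walk: that property alone does not give walks of \emph{every} length between two vertices, but in $K_3$ the needed fact is immediate (length $1$ directly, and for length $k\geq 2$ alternate between the two rows distinct from the target and finish with one step), so the conclusion stands. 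In short, your proof is not merely an alternative: it supplies an argument for a subcase the paper's proof, as written, does not justify.
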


\begin{proof}
It is easy to see that $C_m \Box_\ell C_n$ is 2-spanning cyclable for $m=n=3$ and is left to the reader to verify. The remaining cases are covered by Theorem \ref{T3.4}.
\end{proof}

\begin{lemma}\label{L3.5} 
If $C_m \Box C_n$ has a 2-factor using no edges between column $m-2$ and column $m-1$ which separates three vertices $x,y$ and $z$ in $C_m \Box C_n$, then $C_m \Box_\ell C_n$ has a 2-factor separating $x,y$ and $z$ unless some of the vertices lie in column $m-1$. In the latter case the vertices in column $m-1$ are shifted by $-\ell$ to obtain the set of separated vertices. 
\end{lemma}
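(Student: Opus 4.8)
The plan is to exhibit an explicit, almost edge-preserving bijection between the vertex sets of $C_m \Box C_n$ and $C_m \Box_\ell C_n$ that differs from the identity only by a cyclic shift of the last column, and then to carry the given 2-factor across it. Concretely, I would define $\tau$ on the common vertex set $\{u_{i,j}\}$ by $\tau(u_{i,j}) = u_{i,j}$ for $0 \le i \le m-2$ and $\tau(u_{m-1,j}) = u_{m-1,j-\ell}$, where second coordinates are read modulo $n$. The role of the hypothesis that the 2-factor $F$ uses no edge between column $m-2$ and column $m-1$ is precisely that $\tau$ fails to respect adjacency only on those boundary edges, so an $F$ avoiding them is transported intact.

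The main verification is that $\tau$ sends every edge of $C_m \Box C_n$ other than a boundary edge to an edge of $C_m \Box_\ell C_n$. The vertical edges within a single column and the horizontal edges joining columns $i$ and $i+1$ with $0 \le i \le m-3$ lie in the part of the graph that $\tau$ fixes or rigidly shifts, so they are clearly preserved. The crucial case is the wrap-around edges: in $C_m \Box C_n$ these are $[u_{m-1,j}, u_{0,j}]$, and $\tau$ maps such an edge to $[u_{m-1,j-\ell}, u_{0,j}]$; writing $k = j-\ell$ this is $[u_{m-1,k}, u_{0,k+\ell}]$, which is exactly a jump-$\ell$ edge of $C_m \Box_\ell C_n$. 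The only edges not preserved are the boundary edges $[u_{m-2,j}, u_{m-1,j}]$, which $\tau$ carries to the non-edge $[u_{m-2,j}, u_{m-1,j-\ell}]$, and by hypothesis $F$ contains none of these.

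With the verification in hand the conclusion is immediate. Since $\tau$ is a bijection on vertices preserving every edge actually used by $F$, the image $\tau(F)$ is a spanning subgraph of $C_m \Box_\ell C_n$ in which every vertex again has valency $2$, and $\tau$ maps the cycles of $F$ bijectively onto the cycles of $\tau(F)$; hence $\tau(F)$ is a 2-factor with the same three cycles, separating $\tau(x), \tau(y), \tau(z)$. It then remains only to read off the effect of $\tau$ on the marked vertices: any of $x,y,z$ lying in a column $i \le m-2$ is fixed, while any lying in column $m-1$ is shifted by $-\ell$, which is exactly the statement.

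The only delicate point is the bookkeeping of the sign of the shift and the modular arithmetic so that the wrap-around edges match correctly; taking the shift to be $-\ell$ rather than $+\ell$ on the last column is what makes $[u_{m-1,j}, u_{0,j}]$ land on a jump-$\ell$ edge, and it is also what produces the ``$-\ell$'' in the conclusion. I do not expect any genuine combinatorial obstacle, since the hypothesis was tailored precisely so that the single family of edges destroyed by $\tau$ is absent from $F$.
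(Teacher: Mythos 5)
Your proof is correct and is essentially the paper's own argument: the paper likewise shifts the vertices of column $m-1$ by $-\ell$, observes that the absence of edges of $F$ between columns $m-2$ and $m-1$ means $F$ is carried to a 2-factor of $C_m \Box_\ell C_n$, and reads off the effect on $x,y,z$. Your write-up merely makes explicit the edge-by-edge verification (in particular that wrap-around edges land on jump-$\ell$ edges) that the paper leaves implicit.
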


\begin{proof}
Let $F$ be the 2-factor of $C_m \Box C_n$ separating $x,y$ and $z$. Because there are no edges of $F$ joining vertices of column $m-2$ to vertices of column $m-1$, if we shift the vertices of column $m-1$ by $-\ell$, $F$ is transformed into a 2-factor $F^{'}$ in $C_m \Box_\ell C_n$. Any vertex of $x,y,z$ not lying in column $m-1$ does not change position and any vertex in column $m-1$ changes by $-\ell$. The conclusion follows. 
\end{proof}

\begin{lemma}\label{L3.6} The pseudo-Cartesian product $C_m \Box_\ell C_3$ is 3-spanning cyclable if and only if
$m\geq 4$ and $\ell=0$.
\end{lemma}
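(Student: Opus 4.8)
The plan is to exploit the two possible shapes of the graph: for $\ell=0$ it is the ordinary product $C_m\Box C_3$, while for $\ell\neq0$ (so $\ell\in\{1,2\}$, the two being isomorphic via a reflection of the triangle) it is a ``twisted'' product in which the three rows close up through the seam. Throughout I use the vocabulary of the section: \emph{column} $i$ is the triangle $\{u_{i,0},u_{i,1},u_{i,2}\}$, \emph{row} $j$ is the cycle on the vertices $u_{i,j}$, a \emph{rail} is an edge $u_{i,j}u_{i+1,j}$ (or a seam edge), and for each cut between columns $i$ and $i+1$ there are exactly three rails.

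For sufficiency ($m\geq4,\ \ell=0$) I would dispose of the three configurations singled out at the start of the section. If the three chosen vertices lie in distinct columns, cut the column-cycle into three arcs, one vertex per arc; each arc is some $P_k\Box C_3$, Hamiltonian by Lemma~\ref{L3.3}, and the three Hamilton cycles form the desired 2-factor. If all three lie in one column, the three rows are three disjoint copies of $C_m$ that already separate them. The only configuration that uses $m\geq4$ is two vertices in a common column together with a third: the two column-mates lie in two distinct rows, so I choose a row $t$ meeting exactly one of the three vertices (such a row always exists), take that row as a single $C_m$, and note that the remaining two rows span a prism $K_2\Box C_m$; since $K_2\Box C_m$ is 2-spanning cyclable for $m\geq4$ (Section~2), it carries a 2-factor separating the other two vertices, and the two pieces assemble into a 2-factor of three cycles.

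For necessity I handle the two excluded regimes. When $m=3$ and $\ell=0$ the graph is the rook's graph $C_3\Box C_3$, whose only triangles are the three rows and three columns; a 2-factor of three cycles must be three triangles, hence either ``all rows'' or ``all columns'', and the triple $\{u_{0,0},u_{1,0},u_{0,1}\}$ is separated by neither. The substantive case is $\ell\neq0$, where I claim the three vertices of a single column can never be separated. Suppose a 2-factor $F$ separated $u_{0,0},u_{0,1},u_{0,2}$ into cycles $A,B,C$. These three vertices are pairwise adjacent, so $F$ uses no edge of their triangle; each therefore uses both of its rails, so all three rails of the cut between columns $0$ and $1$ appear in $F$, distributed one to each of $A,B,C$. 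In particular each of $A,B,C$ meets that cut in exactly one rail, i.e.\ crosses it exactly once.

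The main obstacle, and the heart of the proof, is converting ``crosses the $0$-cut once'' into a global contradiction, and I expect this winding argument to be the delicate point. Since each of $A,B,C$ crosses the $0$-cut an odd number of times, each has odd winding number around the column-cycle and hence crosses \emph{every} cut an odd (so positive) number of times. As each cut carries only three rails, every cut is crossed exactly three times, once by each cycle, which forces every rail into $F$ and therefore forbids any triangle edge in $F$. Thus $F$ consists solely of rails, i.e.\ it is the three rows reconnected across the seam; but for $\ell\neq0$ the shift $j\mapsto j+\ell$ is a $3$-cycle on the rows, so these rows fuse into a single cycle, contradicting the existence of three separate cycles $A,B,C$. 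The care needed is in justifying that the winding number is well defined and shared by all cuts, and that oddness of a single crossing forces at least one crossing at every cut; once that is in place, the count ``$\geq 3$ crossings per cut versus $\leq 3$ rails'' closes the argument uniformly for every $m\geq3$.
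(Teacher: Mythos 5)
Your proof is correct and complete, but it reaches the result by a genuinely different route at both substantive steps. For sufficiency, your mixed case (two vertices sharing a column, third elsewhere) — keep one row as a $C_m$ and view the remaining two rows as a prism $K_2\Box C_m$, inside which the other two vertices are separated using the 2-spanning cyclability established in Section 2 — produces the same kind of 2-factor the paper builds by hand (the 1-row plus two explicit cycles threading rows 0 and 2), but your version is more modular because it reuses an earlier theorem instead of an ad hoc construction. The real divergence is the necessity argument for $\ell\neq 0$: the paper works locally, propagating forced structure column by column from the seam leftward (no triangle edge may be used in column $m-1$, so each of its vertices must take its rail to column $m-2$, and so on) until the configuration cannot be completed, whereas you work globally, using parity of cut-crossings to force every rail into $F$ at once and then noting that the all-rails subgraph is a single cycle of length $3m$ when $\gcd(\ell,3)=1$, contradicting the existence of three cycles. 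Both arguments force exactly the same structure (all rails, no triangle edges); yours is more robust — it covers $m=3$, $\ell\neq0$ uniformly, and the step you flag as delicate can be made rigorous without any winding numbers, since a cycle meets every edge cut in an even number of edges, and applying this to the union of the $0$-cut and the $i$-cut transfers oddness of crossings from one cut to all cuts — while the paper's is more elementary but terser, its ``continue from right to left'' being precisely what your counting makes precise. Your $m=3$, $\ell=0$ case (the only triangles of $C_3\Box C_3$ are rows and columns, and neither family separates $\{u_{0,0},u_{1,0},u_{0,1}\}$) is likewise a tidier variant of the paper's argument, which instead shows directly that the forced 3-cycle through $u_{0,0}$ cannot exist; both rest on the same nine-vertex count.
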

\begin{proof} We first consider $m=3$.  Because $|C_3 \Box_\ell C_3|=9$, the only possible 2-factors which
may be used to separate three vertices consist of three 3-cycles.  Choose the three vertices $u_{0,0},
u_{0,1}\mbox{ and }u_{1,0}$. If there are three 3-cycles separating the preceding vertices, then the 3-cycle containing $u_{0,0}$ must contain the edge $[u_{0,0},u_{0,2}]$ and a vertex from
the 2-column.  This is not possible as $\ell$ is unique. Thus, $C_3\Box_\ell C_3$ is not 3-spanning cyclable for all $\ell$.

The next step is to show that $C_m\Box C_3$ is 3-spanning cyclable for all $m\geq 4$.  It is clear that
if we choose three vertices lying in distinct rows or distinct columns, we easily may find a 2-factor
separating the three vertices.  Hence, we may assume two of the vertices are $u_{0,0}$ and $u_{0,1}$  
and the third vertex is $u_{i,0}$ or $u_{i,1}$ for some $i\neq 0$.  We assume that the third vertex
is $u_{i,0}$.  Use the cycle formed by the 1-row and it contains $u_{0,1}$.  To obtain a cycle containing
$u_{0,0}$, take the edges $[u_{0,0},u_{0,2}]$ and $[u_{i-1,0},u_{i-1,2}]$, and join them with the
respective paths along the 0-row and the 2-row from left to right.  Finally, to get the cycle containing
$u_{i,0}$ which completes a 2-factor, use the edges $[u_{i,0},u_{i,2}]$ and $[u_{m-1,0},u_{m-1,2}]$
and the respective paths along the 0-row and the 2-row from left to right.  

We may do the obvious analogous construction when the third vertex is $u_{i,1}$.  This shows that
$C_m\Box C_3$ is 3-spanning cyclable for $m\geq 4$.  To complete the proof we need to show that
neither $C_m\Box_1 C_3$ nor $C_m \Box_2 C_3$ are 3-spanning cyclable.

Choose the three vertices from the 0-column and consider $\ell=1$.  Because three cycles separating
$u_{0,0},u_{0,1}\mbox{ and }u_{0,2}$ may not contain any edge in the 0-column, they must contain
the respective 2-paths $$[u_{1,0},u_{0,0},u_{m-1,2}],[u_{1,1},u_{0,1},u_{m-1,0}]\mbox{ and }
[u_{1,2},u_{0,2},u_{m-1,1}].$$

The cycles can use no edge of the $(m-1)$-column which implies that the 2-path ending at $u_{m-1,j}$
must use the edge to $u_{m-2,j}$.  It is easy to see this must continue as we extend the paths from
right to left.  When we reach the 2-column, the paths may not be extended and the graph is not
3-spanning extendable.  Essentially the same argument works for $\ell=2$ and the proof is complete.
\end{proof}

\begin{lemma}\label{L3.7}
The pseudo-Cartesian product $C_3 \Box_\ell C_n$ is 3-spanning cyclable if and only if $n\geq5$ or, $n=4$ and $\ell \neq 2$. 
\end{lemma}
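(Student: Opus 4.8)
The plan is to treat the impossibility and the construction separately, and to organise the constructive half around the three configurations of the chosen triple named at the start of this section: three vertices in distinct columns, two vertices in a common column, and all three in a common column. Throughout I would exploit two symmetries of $C_3\Box_\ell C_n$: the column-cycling automorphism $u_{i,j}\mapsto u_{i+1,j}$ for $i=0,1$ together with $u_{2,j}\mapsto u_{0,j+\ell}$ (one checks directly that this respects the vertical, horizontal, and jump edges), and the reflection of the $C_n$-direction, which replaces $\ell$ by $-\ell$. These cut down the number of base cases needed. The value $n=3$ needs no new work, since $C_3\Box_\ell C_3$ is the $m=3$ instance of Lemma~\ref{L3.6} and so is not $3$-spanning cyclable, consistent with the stated range. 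The distinct-columns configuration is then immediate for every $n\geq 3$ and every $\ell$: the three columns are pairwise-disjoint $C_n$'s whose union is a $2$-factor, and it separates any triple meeting each column once.

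The two remaining configurations genuinely require horizontal and jump edges, and here I would lean on the reduction furnished by Lemma~\ref{L3.5}. Given a target triple in $C_3\Box_\ell C_n$, I would pre-shift each of its column-$2$ members by $+\ell$, produce a separating $2$-factor of the Cartesian product $C_3\Box C_n$ that uses no edge between column $1$ and column $2$, and then transport it back by Lemma~\ref{L3.5}. So the work reduces to showing that $C_3\Box C_n$ is $3$-spanning cyclable by $2$-factors avoiding all column-$1$-to-$2$ edges; this I would establish with explicit base-case $2$-factors for the smallest $n$ in each parity class and then the row-insertion technique described above, inserting an even number of rows to reach every larger $n$ while leaving the jump edges untouched. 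I expect this avoidance to be achievable for all triples once $n\geq 5$, the extra room in each $C_n$ letting column $2$ return to column $0$ through vertical and wrap edges alone; the case $n=4$ cannot be handled this way (were it possible, Lemma~\ref{L3.5} would force even $\ell=2$ to work), so for $n=4$ I would instead verify the three surviving jumps $\ell\in\{0,1,3\}$ by a direct finite construction, as in the three mutually disjoint $4$-cycles one finds for $\{u_{0,0},u_{0,1},u_{0,2}\}$ when $\ell=1$.

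For the impossibility half I would show $C_3\Box_2 C_4$ is not $3$-spanning cyclable using the triple $u_{0,0},u_{0,1},u_{0,2}$ in column $0$. The decisive feature of $\ell=2=n/2$ is that the horizontal and jump edges split into two disjoint $6$-cycles, $H_1$ on rows $0,2$ and $H_2$ on rows $1,3$, with the twelve column edges being exactly the edges crossing between $H_1$ and $H_2$. The middle marked vertex $u_{0,1}$ has its two column edges leading to the other two marked vertices, so it is forced onto both of its within-$H_2$ edges; a short forcing argument (for instance, in the branch where neither outer vertex uses $u_{0,3}$ one is driven to assign three edges at a single vertex of column $1$) then shows the outer marked vertices $u_{0,0}$ and $u_{0,2}$, which are antipodal on $H_1$, cannot be placed on distinct cycles. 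I would present this as a finite case check in the spirit of the proof of Lemma~\ref{L3.6}.

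I expect the genuine obstacle to be the all-in-one-column configuration on the constructive side. This is the only configuration that truly needs the horizontal and jump edges to break a single column into three cycles, so it is where the jump $\ell$ becomes relevant and where the non-cyclable locus $n=4,\ell=2$ lives. The delicate point is to make the construction uniform enough to cover every $(n,\ell)$ with $n\geq 5$ together with $n=4,\ell\in\{0,1,3\}$, yet to see clearly why it must break down precisely at $n=4,\ell=2$; reconciling the parity of the inserted rows and the placement of the jump edges with the requirement that the three marked vertices stay on distinct cycles is the step I would write out most carefully, and it is where I would confirm that the avoidance of column-$1$-to-$2$ edges really is attainable for all $n\geq 5$.
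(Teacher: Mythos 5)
Your overall architecture tracks the paper's quite closely: defer $n=3$ to Lemma~\ref{L3.6}, dispose of the distinct-columns case with the three column cycles, use Lemma~\ref{L3.5} to reduce the remaining configurations to $2$-factors of the Cartesian product avoiding column-$1$-to-$2$ edges, treat $n=4$, $\ell\in\{0,1,3\}$ by explicit constructions, and settle $n=4$, $\ell=2$ by a finite impossibility check. On that last point the paper's argument is cleaner than your forcing sketch: since $C_3\Box_2C_4$ has girth $4$ and only $12$ vertices, a separating $2$-factor must consist of three $4$-cycles, and the forced $2$-path $[u_{1,1},u_{0,1},u_{2,3}]$ lies in no $4$-cycle because $u_{1,1}$ and $u_{2,3}$ have no common neighbour other than $u_{0,1}$; your two-$6$-cycle decomposition is correct and your case check could be completed, but as written it is only a sketch of one branch.

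The genuine gap is your claim that the avoidance reduction works for all $n\geq 5$. It fails at $n=5$: in $C_3\Box C_5$ there is \emph{no} $2$-factor separating $u_{0,0},u_{0,1},u_{0,2}$ that avoids every edge between columns $1$ and $2$. To see this, note such a $2$-factor must use the edges $[u_{0,1},u_{1,1}]$ and $[u_{0,1},u_{2,1}]$, since the other two neighbours of $u_{0,1}$ are the marked vertices. Under avoidance each $u_{2,j}$ has only three admissible edges (two vertical, one wrap to $u_{0,j}$), so each uses at least one vertical edge; hence the column-$2$ vertical edges used form either one spanning path or two paths covering $2+3$ vertices, with wrap edges exactly at path endpoints, and $u_{2,1}$ must be an endpoint. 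A single spanning path runs from $u_{0,1}$ through all of column $2$ and wraps its far end at $u_{2,0}$ or $u_{2,2}$, putting $u_{0,0}$ or $u_{0,2}$ on the cycle of $u_{0,1}$. In the two-path case, the paths $\{u_{2,1},u_{2,2}\}$ and $\{u_{2,1},u_{2,0}\}$ cause the same merger, while the paths $\{u_{2,1},u_{2,2},u_{2,3}\}$ (resp.\ $\{u_{2,1},u_{2,0},u_{2,4}\}$) force the wraps at rows $3,4,0$ (resp.\ rows $4,3,2$) in such a way that $u_{0,2}$ (resp.\ $u_{0,0}$) is left with at most one usable neighbour, since its row-$3$ (resp.\ row-$4$) neighbour already lies on the cycle of $u_{0,1}$ and its column-$2$ neighbour is full. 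So avoidance is impossible, which is exactly why the paper abandons Lemma~\ref{L3.5} for $n=5$ and instead exhibits, for each $\ell\in\{0,1,2,3,4\}$ separately, explicit separating $2$-factors for the two essentially distinct column triples $\{u_{0,0},u_{0,1},u_{0,3}\}$ and $\{u_{0,0},u_{0,1},u_{0,2}\}$. Your proof needs the same per-$\ell$ analysis for $n=5$; the plan as stated collapses precisely at the point you flagged as needing confirmation.
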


\begin{proof}
The case when $n=3$ is settled in Lemma \ref{L3.6} above. Note that by using Lemma \ref{L3.5} it suffices to show that there is a 2-factor which separates the three vertices and doesn't use the edges between the last two columns for when $\ell =0$.

We now show that the graphs are 3-spanning cyclable for $n\geq 6$.  We can assume that $\ell =0$ for this case.

The first case is when all three vertices are in different columns. In this case we simply take the three cycle columns to be our 2-factor. 

The second case is when two vertices are in the same column and the other is not. Without loss of generality we may assume that two of the vertices are $u_{0,0}$ and $u_{0,j}$, where $j\neq0$ and the other vertex is $u_{2,k}$. We can take the last column cycle to be the cycle containing $u_{2,k}$. The next cycle can be formed by starting at $u_{0,0}$ and moving one up or one down depending on where $u_{0,j}$ is, and then moving one to the right, then one vertex up or down back to the vertex in the same row as $u_{0,0}$ and then back to $u_{0,0}$. We can then make a cycle using the remaining vertices in a similar manner.

The final case is when all three vertices are in the same column. Again we may assume that all three vertices are in the first column. There are three subcases to consider. When no two vertices are adjacent, exactly two are adjacent, and when all three are adjacent to each other. 
For the first subcase we may assume that the three vertices are $u_{0,0}, u_{0,j}$ and $u_{0,k}$, where $1 < j < k-1 < n-2$. The 2-factor for this case is 
\begin{center}
$[u_{0,0}, u_{1,0},u_{1,n-1},...,u_{1,k+1},u_{0,k+1},...,u_{0,n-1},u_{0,0} ]$, 

$[u_{0,1},...,u_{0,k-2},u_{1,k-2},...,u_{1,1},u_{0,1}]$ and 

$[u_{0,k-1},u_{1,k-1},u_{1,k},u_{0,k},u_{2,k},u_{2,k+1},...,u_{2,k-1},u_{0,k-1}]$.
\end{center}

For the second subcase we may assume that the three vertices are $u_{0,0},u_{0,1}$ and $u_{0,k}$, where $2 < k < n-1$. If $k\geq4$, then we can use the same 2-factor as above. If $k=3$, we use the 2-factor 
\begin{center}
$[u_{0,0}, u_{1,0},u_{1,n-1},...,u_{1,5},u_{0,5},...,u_{0,n-1},u_{0,0} ]$, 

$[u_{0,1},u_{0,2},u_{1,2},u_{1,1},u_{0,1}]$ and 

$[u_{0,3},u_{1,3},u_{1,4},u_{0,4},u_{2,4},u_{2,5},...,u_{2,3},u_{0,3}]$.
\end{center}

Finally, for the last subcase we may assume the three vertices are $u_{0,0},u_{0,1}$ and $u_{0,2}$. The 2-factor for this will be 
\begin{center}
$[u_{0,0}, u_{1,0},u_{1,n-1},...,u_{1,5},u_{0,5},...,u_{0,n-1},u_{0,0} ]$, 

$[u_{0,1},u_{1,1},u_{1,2},u_{1,3},u_{1,4},u_{0,4},u_{2,4},u_{2,5},...,u_{2,1},u_{0,1}]$ and  

$[u_{0,2},u_{0,3},u_{2,3},u_{2,2},u_{0,2}]$.
\end{center}

We now verify the cases when $n=4$ and $n=5$.  Note that proofs for $n\geq 6$ remain valid whenever the three vertices are not in the same column. Hence, the case when three vertices are in the same column remains. We start with $n=5$. Without loss of generality the only subcases to check are when the vertices are $\{ u_{0,0},u_{0,1},u_{0,3}\}$ and $\{ u_{0,0},u_{0,1},u_{0,2}\}$.  The 2-factors for the first subcase, for $\ell = 0,...,4$ are respectively 

\begin{center}
$\{[u_{0,0},u_{0,4},u_{1,4},u_{2,4},u_{2,0},u_{1,0},u_{0,0}],[u_{0,1},u_{0,2},u_{1,2},u_{2,2},u_{2,1},u_{1,1},u_{0,1}],[u_{0,3},u_{1,3},u_{2,3},u_{0,3}]\}$, 

$\{[u_{0,0},u_{0,4},u_{1,4},u_{1,0},u_{0,0}],[u_{0,1},u_{0,2},u_{1,2},u_{1,1},u_{0,1}],[u_{0,3},u_{1,3},u_{2,3},u_{2,4},...,u_{2,2},u_{0,3}] \}$,

$\{[u_{0,0},u_{0,4},u_{1,4},u_{2,4},u_{2,0},u_{1,0},u_{0,0}],[u_{0,1},u_{0,2},u_{1,2},u_{1,1},u_{0,1}],[u_{0,3},u_{1,3},u_{2,3},u_{2,2},u_{2,1},u_{0,3}] \}$,

$\{[u_{0,0},u_{0,4},u_{1,4},u_{1,0},u_{0,0}], [u_{0,1},u_{0,2},u_{1,2},u_{2,2},u_{2,1},u_{1,1},u_{0,1}], [u_{0,3},u_{1,3},u_{2,3}, u_{2,4},u_{2,0},u_{0,3}] \}$ and 

$\{[u_{0,0},u_{0,4},u_{1,4},u_{1,0},u_{0,0}], [u_{0,1},u_{0,2},u_{1,2},u_{1,1},u_{0,1}],[u_{0,3},u_{1,3},u_{2,3}, u_{2,2},...,u_{2,4},u_{0,3}]  \}$.
\end{center}

The 2-factors for the second subcase, for $\ell = 0,...,4$ are respectively 

\begin{center}

$\{[u_{0,0},u_{0,4},u_{1,4},u_{2,4},u_{2,0},u_{1,0},u_{0,0}], [u_{0,1},u_{1,1},u_{2,1},u_{0,1}],[u_{0,2},u_{0,3},u_{1,3},u_{2,3},u_{2,2},u_{1,2},u_{0,2}] \}$,

$\{[u_{0,0},u_{0,4},u_{1,4},u_{1,0},u_{0,0}], [u_{0,1},u_{1,1},u_{1,2},u_{1,3},u_{2,3},u_{2,4},u_{2,0},u_{0,1}], [u_{0,2},u_{0,3},u_{2,2},u_{2,1},u_{0,2}] \}$,
$\{[u_{0,0},u_{0,4},u_{1,4},u_{1,3},u_{2,3},u_{0,0}], [u_{0,1},u_{1,1},u_{1,0},u_{2,0},u_{2,4},u_{0,1}], [u_{0,2},u_{0,3},u_{2,1},u_{2,2},u_{1,2},u_{0,2}] \}$,
$\{[u_{0,0},u_{0,4},u_{1,4},u_{2,4},u_{2,0},u_{1,0},u_{0,0}], [u_{0,1},u_{1,1},u_{2,1},u_{2,2},u_{2,3},u_{0,1}], [u_{0,2},u_{0,3},u_{1,3},u_{1,2},u_{0,2}] \}$ and

$\{[u_{0,0},u_{0,4},u_{1,4},u_{1,0},u_{0,0}], [u_{0,1},u_{1,1},u_{2,1},u_{2,0},...,u_{2,2},u_{0,1}], [u_{0,2},u_{0,3},u_{1,3},u_{1,2},u_{0,2}] \}$.
\end{center}
This completes the proof for $n=5$. We now look at $n=4$. Without loss of generality we need only to separate the vertices $u_{0,0}, u_{0,1}$ and $u_{0,2}$. The 2-factor which separates these for $\ell = 0,1,3$ are respectively 

\begin{center}

$\{[u_{0,0},u_{0,3},u_{1,3},u_{2,3},u_{2,0},u_{1,0},u_{0,0}], [u_{0,1},u_{1,1},u_{2,1},u_{0,1}], [u_{0,2},u_{1,2},u_{2,2},u_{0,2}] \}$, 

$\{[u_{0,0},u_{0,3},u_{1,3},u_{2,3},u_{0,0}],[u_{0,1},u_{1,1},u_{1,0},u_{2,0},u_{0,1}],[u_{0,2},u_{1,2},u_{2,2},u_{2,1},u_{0,2}] \}$ and  

$\{[u_{0,0},u_{0,3},u_{2,0},u_{1,0},u_{0,0}], [u_{0,1},u_{1,1},u_{2,1},u_{2,2},u_{0,1}],[u_{0,2},u_{1,2},u_{1,3},u_{2,3},u_{0,2}] \}$. 
\end{center}

It remains to show that the vertices cannot be separated when $\ell=2$. If they can be separated, then the separating 2-factor must consist of three 4-cycles because $C_3\Box_2 C_4$ has girth 4. The cycle $C$ containing 
$u_{0,1}$ must contain the 2-path $[u_{1,1},u_{0,1},u_{2,3}]$. This 2-path
does not belong to a 4-cycle because $u_{1,1}$ and $u_{2,3}$ have no common
neighbour.  Hence, there is no separating 2-factor.     

\end{proof}
\newpage
\begin{theorem}\label{T3.8}
The pseudo-Cartesian product $C_m \Box_\ell C_n$ is 3-spanning cyclable if and only if 
\begin{itemize}
\item $m\geq 4, n=3$ and $\ell=0$,
\item $m=3, n\geq5$,
\item $m=3,n=4$ and $\ell \neq 2$,
\item $m,n \geq 4$. 
\end{itemize}
\end{theorem}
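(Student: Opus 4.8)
The plan is to let the two preceding lemmas absorb all of the boundary behaviour and to concentrate the real work on the range $m,n\ge 4$. For the necessity direction, observe that every triple $(m,n,\ell)$ with $m,n\ge 3$ that is \emph{not} listed among the four bullets must have $m=3$ or $n=3$, since the last bullet already covers every $m,n\ge 4$. These leftover triples are precisely the graphs previously shown not to be 3-spanning cyclable: $C_m\Box_\ell C_3$ whenever $m=3$ or $\ell\neq 0$ (Lemma \ref{L3.6}), and $C_3\Box_2 C_4$ (Lemma \ref{L3.7}). Conversely, the first three bullets are exactly the positive conclusions of Lemmas \ref{L3.6} and \ref{L3.7}. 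Hence the only genuinely new assertion is the fourth bullet, that $C_m\Box_\ell C_n$ is 3-spanning cyclable for every $\ell$ once $m,n\ge 4$, and that is where I would spend the effort.

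First I would strip off the jump. By Lemma \ref{L3.5} it suffices, given any three target vertices, to exhibit a 2-factor of the ordinary product $C_m\Box C_n$ made of exactly three cycles that separates them while using no edge between columns $m-2$ and $m-1$ (the targets lying in column $m-1$ being shifted by $+\ell$ in advance to cancel the $-\ell$ correction of Lemma \ref{L3.5}). Deleting that one band of edges presents $C_m\Box C_n$ as a copy of $P_m\Box C_n$, so the fourth bullet reduces to showing that $P_m\Box C_n$ is 3-spanning cyclable for all $m,n\ge 4$. After rotating the rows and, if needed, reflecting the column-path, I would split into cases according to how the three vertices are distributed among the columns.

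If the three vertices lie in three distinct columns, I would cut the column-path into three consecutive blocks with one target column apiece; each block is a product $P_w\Box C_n$, hence Hamiltonian by Lemma \ref{L3.3}, and its Hamilton cycle becomes one cycle of a three-cycle 2-factor that automatically separates the targets. If exactly two targets share a column, I would peel off a consecutive block carrying the lone target (one Hamilton cycle by Lemma \ref{L3.3}) and apply Lemma \ref{L3.1} or Lemma \ref{L3.2} to the remaining strip, which has width at least three because $m\ge 4$, in order to distribute the two coincident targets into the other two cycles. The only place this clean separation breaks down is when all three targets are crowded into three consecutive columns, which I would fold into the final case.

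The hard part will be the case in which all three targets occupy a single column. This is the exact analogue of the last and most delicate case of Lemma \ref{L3.7}, now with $m\ge 4$ columns available. I would dispose of it by writing explicit three-cycle 2-factors for the smallest boards and then invoking the row- and column-insertion technique of this section to reach all $m,n\ge 4$; embedding the common target column in a three-column sub-strip and inserting the surplus columns should in fact let me reuse the $m=3$ constructions of Lemma \ref{L3.7}, which were already arranged to avoid the last band. The recurring subtlety, and the true source of the bookkeeping, is that each base 2-factor must simultaneously consist of exactly three cycles, separate the three prescribed vertices, and avoid the band between the last two columns so that Lemma \ref{L3.5} applies; verifying that no insertion of rows or columns merges or splits a cycle, displaces a target, or reintroduces a forbidden edge is the step that demands the most care.
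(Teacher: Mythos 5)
Your necessity direction, your use of the pre-shift by $+\ell$ to invoke Lemma \ref{L3.5}, and your three-distinct-columns case all match the paper. But the core of your sufficiency argument has a genuine gap: the statement you reduce to, that $P_m\Box C_n$ is 3-spanning cyclable for all $m,n\ge 4$, is false. In $P_m\Box C_n$ take $x=u_{0,0}$, $y=u_{0,2}$ and $z=u_{1,1}$: two targets in an end column of the path at row-distance two, and the third target in the adjacent column in the row between them. The vertex $u_{0,1}$ has exactly three neighbours, namely $x$, $y$ and $z$; any cycle of a 2-factor passing through $u_{0,1}$ must contain two of its neighbours, hence two of the targets, so no 2-factor with three cycles separates this triple --- in any $P_m\Box C_n$. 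Translated back, this means there are triples in $C_m\Box_\ell C_n$ (two targets in column $m-1$, which is an end of the cut-open path, and one in column $0$, with the rows arranged as above after your pre-shift) for which \emph{no} 2-factor avoiding the band between columns $m-2$ and $m-1$ exists at all, even though these triples are separable in $C_m\Box_\ell C_n$ itself. So Lemma \ref{L3.5}, with the band fixed once and for all, cannot carry the whole proof. Your second case (two targets sharing a column) is exactly where this bites, and your peeling recipe already fails mechanically there: a block containing the lone target either disconnects the path (if the block is internal) or leaves a remaining strip of width at most two (when the shared column sits at or next to an end of the path), so Lemmas \ref{L3.1} and \ref{L3.2} cannot be invoked.

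The repair --- and this is what the paper does --- is to handle the two-column case inside the pseudo-product directly, without Lemma \ref{L3.5}. Take the entire column containing the lone target $z$ as one cycle of the 2-factor; every column of $C_m\Box_\ell C_n$ is a cycle $C_n$ no matter what $\ell$ is. The subgraph induced on the remaining $m-1$ columns is isomorphic to $P_{m-1}\Box C_n$ for every $\ell$, because the jump edges merely relabel rows as one passes from column $m-1$ to column $0$; Theorem \ref{T3.4} (valid since $m-1\ge 3$ and $n\ge 4$) then separates the two coincident targets, giving exactly three cycles. Equivalently, you could rescue your scheme by applying the rotation automorphism of the pseudo-product \emph{before} cutting, so that the shared column never lands at an end of the cut-open path: the essential point your once-and-for-all reduction gives away is that the position of the cut must be chosen after looking at the targets. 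Your final case (all three targets in one column) is in the right spirit and is how the paper proceeds (explicit small 2-factors plus row/column insertion, its Figures 3 through 8); note only that the $m=3$ constructions of Lemma \ref{L3.7} cannot all be recycled by padding with columns, since for $\ell\neq 0$ and small $n$ those 2-factors use jump edges rather than avoiding the band, which is why the paper draws fresh figures for $m\ge 4$, some band-avoiding and some given separately for each jump.
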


\begin{proof}

Lemmas \ref{L3.6} and \ref{L3.7} cover the cases $n=3$ and $m=3$ respectively. We now look at $m\geq4$ and $n\geq 4$. Let $x,y$ and $z$ be three vertices to be separated by a 2-factor. We consider three cases depending on the number of columns in which $x,y$ and $z$ appear. \newline

Case 1: Three columns. At least one column does not contain one of the three vertices and so we may assume that $x$ lies in some column $i$, $y$ lies in column $j$, and $z$ in column $k$, where $0 \leq i < j < k \leq m-2$. Let the graph be $C_m \Box C_n$. The subgraph induced by columns $m-1, 0, 1, ..., i$ is isomorphic to $P_{i+2} \Box C_n$. The subgraph induced by columns $i+1, i+2, ..., j$ is isomorphic to $P_{j-i} \Box C_n$. Finally, the subgraph induced by columns $j+1, j+2, ...,m-2$ is isomorphic to $P_{m-2-j} \Box C_n$. Each of these three subgraphs is Hamiltonian by Lemma \ref{L3.3} and this yields a 2-factor separating $x,y$ and $z$. This 2-factor uses no edges between columns $m-2$ and $m-1$ and none of the three vertices lie in column $m-1$. Lemma \ref{L3.5} then implies there is a 2-factor separating any three vertices in distinct columns in $C_m \Box_\ell C_n$. \newline 

Case 2: Two columns. We may assume $x$ and $y$ both lie in column 0 and $z$ lies in some other column. The 2-factor can be formed as follows. We take the column containing $z$ as one cycle. The subgraph induced on the remaining vertices is isomorphic to $P_{m-1}\Box C_n$ no matter the value of $\ell$. It is 2-spanning cyclable by Theorem \ref{T3.4}. Thus there is a 2-factor of this subgraph separating $x,y$ and $z$. \newline

Case 3: One column. Without loss of generality we may assume that $x,y$ and $z$ lie in column 0 and that $x$ is the upper left vertex of the vertex array, that is, position $(0,n-1)$. There are some subcases to consider. 

We first assume that no two of $x,y,z$ are successive in the column. So let $z$ and $y$ be in the respective positions  $(0,i)$ and $(0,j)$, where $0 < i < j-1 < n-3$. Note that $n\geq6$ in this subcase.

\begin{figure}[H]
\centering

\begin{picture}(200,150)(-80,-25)

\multiput(-50,0)(20,0){4}{\multiput(0,0)(0,20){6}{\circle*{5}}}

\put(-60,20){z}
\put(-60,60){y}
\put(-60, 100){x}

\multiput(-50,40)(0,100){1}{\line(0,1){20}}
\multiput(-50,80)(0,100){1}{\line(0,1){20}}

\multiput(-30,40)(0,100){1}{\line(0,1){20}}

\multiput(-30,80)(0,100){1}{\line(0,1){20}}

\multiput(-10,20)(0,100){1}{\line(0,1){80}}

\multiput(10,20)(0,100){1}{\line(0,1){80}}

\multiput(-50,20)(0,100){1}{\line(1,0){40}}
\multiput(-50,0)(0,100){1}{\line(1,0){40}}

\multiput(-50,40)(0,100){1}{\line(1,0){20}}
\multiput(-50,60)(0,100){1}{\line(1,0){20}}
\multiput(-50,80)(0,100){1}{\line(1,0){20}}
\multiput(-50,100)(0,100){1}{\line(1,0){20}}

\qbezier(-50,0)(-20,15)(10,0)
\qbezier(-50,20)(-20,35)(10,20)
\qbezier(-10,0)(5,60)(-10,100)
\qbezier(10,0)(25,60)(10,100)

\multiput(50,0)(20,0){4}{\multiput(0,0)(0,20){7}{\circle*{5}}}

\put(40,40){z}
\put(40,80){y}
\put(40, 120){x}

\multiput(50,0)(0,100){1}{\line(0,1){20}}
\multiput(50,60)(0,100){1}{\line(0,1){20}}
\multiput(50,100)(0,100){1}{\line(0,1){20}}

\multiput(70,20)(0,100){1}{\line(0,1){20}}
\multiput(70,60)(0,100){1}{\line(0,1){20}}
\multiput(70,100)(0,100){1}{\line(0,1){20}}

\multiput(90,20)(0,100){1}{\line(0,1){100}}

\multiput(110,0)(0,100){1}{\line(0,1){20}}
\multiput(110,40)(0,100){1}{\line(0,1){80}}

\multiput(50,0)(0,100){1}{\line(1,0){40}}
\multiput(70,20)(0,100){1}{\line(1,0){20}}
\multiput(50,40)(0,100){1}{\line(1,0){20}}
\multiput(50,60)(0,100){1}{\line(1,0){20}}
\multiput(50,80)(0,100){1}{\line(1,0){20}}
\multiput(50,100)(0,100){1}{\line(1,0){20}}
\multiput(50,120)(0,100){1}{\line(1,0){20}}

\qbezier(50,20)(80,35)(110,20)
\qbezier(50,40)(80,55)(110,40)
\qbezier(90,0)(105,60)(90,120)
\qbezier(110,0)(125,60)(110,120)

\put(0,-35){{\sc Figure 3.} }
\end{picture}  \newline

\end{figure}

Consider Figure 3.  In both graphs we may subdivide the vertical edges between the top row
and the row below, and between the row containing $y$ and the row below it so that we may achieve an arbitrary gap between $x$ and $y$ and an arbitrary gap between $y$ and $z$. We may insert an arbitrary number of rows between the two bottom rows in the rightmost graph so that we have an arbitrary gap between $z$ and $x$ of two or more.  We may insert an
arbitrary number of columns between the third and fourth column without using any edges
between vertices of the two rightmost columns.  Thus, Lemma \ref{L3.5} takes care of this
case.

The next subcase is when $x$ and $y$ are successive and $z$ has a gap on both sides in the column. Note that this implies $n\geq5$. The case $n=5$ is special and we handle it separately. \newline 

\begin{figure}[H]
\centering

\begin{picture}(200,100)(-40,-20)

\multiput(-100,0)(20,0){4}{\multiput(0,0)(0,20){5}{\circle*{5}}}

\put(-115,20){z}
\put(-115,60){y}
\put(-115, 80){x}

\put(40,20){z}
\put(40,60){y}
\put(40, 80){x}

\put(190,20){z}
\put(190,60){y}
\put(190, 80){x}

\multiput(-100,40)(0,100){1}{\line(0,1){20}}
\multiput(-80,40)(0,100){1}{\line(0,1){20}}
\multiput(-60,0)(0,100){1}{\line(0,1){20}}
\multiput(-60,40)(0,100){1}{\line(0,1){40}}
\multiput(-40,0)(0,100){1}{\line(0,1){20}}
\multiput(-40,40)(0,100){1}{\line(0,1){40}}

\multiput(-100,0)(0,100){1}{\line(1,0){20}}
\multiput(-100,20)(0,100){1}{\line(1,0){40}}
\multiput(-100,40)(0,100){1}{\line(1,0){20}}
\multiput(-60,40)(0,100){1}{\line(1,0){20}}
\multiput(-100,60)(0,100){1}{\line(1,0){20}}
\multiput(-100,80)(0,100){1}{\line(1,0){20}}

\qbezier(-100,20)(-70,35)(-40,20)
\qbezier(-100,0)(-115,40)(-100,80)
\qbezier(-80,0)(-95,40)(-80,80)
\qbezier(-60,0)(-45,40)(-60,80)
\qbezier(-40,0)(-25,40)(-40,80)

\multiput(50,0)(20,0){4}{\multiput(0,0)(0,20){5}{\circle*{5}}}

\multiput(50,0)(0,100){1}{\line(0,1){20}}
\multiput(50,40)(0,100){1}{\line(0,1){20}}

\multiput(70,0)(0,100){1}{\line(0,1){20}}
\multiput(70,40)(0,100){1}{\line(0,1){20}}

\multiput(90,40)(0,100){1}{\line(0,1){40}}
\multiput(90,0)(0,100){1}{\line(0,1){20}}
\multiput(110,20)(0,100){1}{\line(0,1){20}}
\multiput(110,60)(0,100){1}{\line(0,1){20}}

\multiput(90,0)(0,100){1}{\line(1,0){20}}
\multiput(90,20)(0,100){1}{\line(1,0){20}}
\multiput(90,40)(0,100){1}{\line(1,0){20}}

\multiput(50,0)(0,100){1}{\line(1,0){20}}

\multiput(50,20)(0,100){1}{\line(1,0){20}}
\multiput(50,40)(0,100){1}{\line(1,0){20}}
\multiput(50,60)(0,100){1}{\line(1,0){20}}
\multiput(50,80)(0,100){1}{\line(1,0){40}}

\qbezier(50,80)(80,70)(110,60)
\qbezier(110,0)(125,40)(110,80)

\multiput(200,0)(20,0){4}{\multiput(0,0)(0,20){5}{\circle*{5}}}

\multiput(200,0)(0,100){1}{\line(0,1){20}}
\multiput(200,40)(0,100){1}{\line(0,1){20}}

\multiput(220,0)(0,100){1}{\line(0,1){20}}
\multiput(220,40)(0,100){1}{\line(0,1){20}}

\multiput(240,0)(0,100){1}{\line(0,1){60}}

\multiput(260,0)(0,100){1}{\line(0,1){40}}
\multiput(260,60)(0,100){1}{\line(0,1){20}}

\multiput(200,0)(0,100){1}{\line(1,0){20}}
\multiput(200,20)(0,100){1}{\line(1,0){20}}
\multiput(200,40)(0,100){1}{\line(1,0){20}}
\multiput(200,60)(0,100){1}{\line(1,0){20}}
\multiput(240,60)(0,100){1}{\line(1,0){20}}
\multiput(200,80)(0,100){1}{\line(1,0){40}}

\qbezier(200,80)(230,60)(260,40)
\qbezier(240,0)(225,40)(240,80)
\qbezier(260,0)(275,40)(260,80)

\put(50,-35){{\sc Figure 4.} }
\end{picture}  \newline

\end{figure}

Figure 4 shows 2-factors separating $x,y$ and $z$ for $C_4 \Box_\ell C_5$ for $\ell = 0,1$ and 2. We can flip the right and middle graph to obtain covers for the cases $\ell=3$ and 4 respectively. It is easy to see that we can add any number of columns to each of the initial figures so that $C_m \Box_\ell C_5$ has a 2-factor separating $x,y$ and $z$ for all $m\geq4$. 

\begin{figure}[H]
\centering

\begin{picture}(200,160)(-80,-30)

\multiput(0,0)(20,0){4}{\multiput(0,0)(0,20){6}{\circle*{5}}}

\put(-15,40){z}
\put(-15,80){y}
\put(-15, 100){x}

\multiput(0,60)(0,100){1}{\line(0,1){20}}

\multiput(20,60)(0,100){1}{\line(0,1){20}}

\multiput(40,0)(0,100){1}{\line(0,1){20}}
\multiput(40,40)(0,100){1}{\line(0,1){60}}

\multiput(60,0)(0,100){1}{\line(0,1){20}}
\multiput(60,40)(0,100){1}{\line(0,1){60}}

\multiput(0,0)(0,100){1}{\line(1,0){20}}
\multiput(0,20)(0,100){1}{\line(1,0){40}}
\multiput(0,40)(0,100){1}{\line(1,0){40}}
\multiput(0,60)(0,100){1}{\line(1,0){20}}
\multiput(0,80)(0,100){1}{\line(1,0){20}}
\multiput(0,100)(0,100){1}{\line(1,0){20}}

\qbezier(0,0)(-15,60)(0,100)
\qbezier(20,0)(4,60)(20,100)
\qbezier(40,0)(55,60)(40,100)
\qbezier(60,0)(75,60)(60,100)
\qbezier(0,20)(30,5)(60,20)
\qbezier(0,40)(30,55)(60,40)

\put(0,-35){{\sc Figure 5.} }
\end{picture}  \newline

\end{figure}

Now let $n\geq 6$. This means there is a gap with at least two vertices. Figure 5 shows a typical situation but note that $z$ also could be in row 1 instead of row 2 as shown. We may subdivide the vertical edges between the row containing $y$ and the row below to obtain an arbitrary gap between $y$ and $z$. Similarly, we may add an arbitrary number of rows between the two bottom rows to obtain an arbitrary gap between $z$ and $x$. It is easy to see that we can add any number of columns between the last two columns without using any edges between the last two columns. This completes the case that two of the vertices are successive.  
\newline 

\begin{figure}[H]
\centering

\begin{picture}(200,90)(-30,-30)

\multiput(-100,0)(20,0){4}{\multiput(0,0)(0,20){4}{\circle*{5}}}

\put(-110,20){z}
\put(-110,40){y}
\put(-110, 60){x}

\put(40,20){z}
\put(40,40){y}
\put(40, 60){x}

\put(190,20){z}
\put(190,40){y}
\put(190, 60){x}

\multiput(-100,0)(0,100){1}{\line(0,1){20}}
\multiput(-40,0)(0,100){1}{\line(0,1){20}}

\multiput(-100,0)(0,100){1}{\line(1,0){60}}
\multiput(-100,20)(0,100){1}{\line(1,0){60}}
\multiput(-100,40)(0,100){1}{\line(1,0){60}}
\multiput(-100,60)(0,100){1}{\line(1,0){60}}

\qbezier(-100,40)(-65,55)(-40,40)
\qbezier(-100,60)(-65,75)(-40,60)

\multiput(50,0)(20,0){4}{\multiput(0,0)(0,20){4}{\circle*{5}}}

\multiput(50,0)(0,100){1}{\line(0,1){20}}
\multiput(70,0)(0,100){1}{\line(0,1){20}}
\multiput(90,0)(0,100){1}{\line(0,1){40}}
\multiput(110,0)(0,100){1}{\line(0,1){20}}
\multiput(110,40)(0,100){1}{\line(0,1){20}}

\multiput(50,0)(0,100){1}{\line(1,0){20}}
\multiput(90,0)(0,100){1}{\line(1,0){20}}
\multiput(50,20)(0,100){1}{\line(1,0){20}}
\multiput(50,40)(0,100){1}{\line(1,0){40}}
\multiput(50,60)(0,100){1}{\line(1,0){60}}

\qbezier(50,40)(80,30)(110,20)
\qbezier(50,60)(80,50)(110,40)

\multiput(200,0)(20,0){4}{\multiput(0,0)(0,20){4}{\circle*{5}}}

\multiput(200,0)(0,100){1}{\line(0,1){20}}
\multiput(220,0)(0,100){1}{\line(0,1){20}}
\multiput(240,0)(0,100){1}{\line(0,1){40}}
\multiput(260,20)(0,100){1}{\line(0,1){40}}

\multiput(200,0)(0,100){1}{\line(1,0){20}}
\multiput(240,0)(0,100){1}{\line(1,0){20}}
\multiput(200,20)(0,100){1}{\line(1,0){20}}
\multiput(200,40)(0,100){1}{\line(1,0){40}}
\multiput(200,60)(0,100){1}{\line(1,0){60}}

\qbezier(200,40)(230,20)(260,0)
\qbezier(200,60)(230,40)(260,20)

\put(50,-35){{\sc Figure 6.} }
\end{picture}  \newline

\end{figure}

We now consider the case that the three vertices are successive. Figure 6 provides a solution for $C_4 \Box_\ell C_4$ for all $\ell$. 
We can easily add any number of columns giving us a solution for $C_m \Box_\ell C_4$ for all $m \geq 4$. \newline

\begin{figure}[H]
\centering

\begin{picture}(200,100)(-30,-20)

\multiput(-100,0)(20,0){4}{\multiput(0,0)(0,20){5}{\circle*{5}}}

\put(-115,40){z}
\put(-115,60){y}
\put(-115, 80){x}

\put(35,40){z}
\put(35,60){y}
\put(35, 80){x}

\put(190,40){z}
\put(190,60){y}
\put(190, 80){x}

\multiput(-100,20)(0,100){1}{\line(0,1){20}}
\multiput(-80,20)(0,100){1}{\line(0,1){20}}
\multiput(-60,0)(0,100){1}{\line(0,1){60}}
\multiput(-40,0)(0,100){1}{\line(0,1){60}}

\multiput(-100,0)(0,100){1}{\line(1,0){20}}
\multiput(-100,20)(0,100){1}{\line(1,0){20}}
\multiput(-100,40)(0,100){1}{\line(1,0){20}}
\multiput(-100,60)(0,100){1}{\line(1,0){40}}
\multiput(-100,80)(0,100){1}{\line(1,0){20}}
\multiput(-60,80)(0,100){1}{\line(1,0){20}}

\qbezier(-100,0)(-115,40)(-100,80)
\qbezier(-80,0)(-95,40)(-80,80)
\qbezier(-60,0)(-75,40)(-60,80)
\qbezier(-40,0)(-25,40)(-40,80)
\qbezier(-100,60)(-70,75)(-40,60)

\multiput(50,0)(20,0){4}{\multiput(0,0)(0,20){5}{\circle*{5}}}

\multiput(50,20)(0,100){1}{\line(0,1){20}}
\multiput(90,20)(0,100){1}{\line(0,1){20}}
\multiput(110,0)(0,100){1}{\line(0,1){40}}
\multiput(110,60)(0,100){1}{\line(0,1){20}}

\multiput(50,0)(0,100){1}{\line(1,0){40}}
\multiput(50,20)(0,100){1}{\line(1,0){40}}
\multiput(50,40)(0,100){1}{\line(1,0){40}}
\multiput(50,60)(0,100){1}{\line(1,0){60}}
\multiput(50,80)(0,100){1}{\line(1,0){40}}

\qbezier(50,0)(35,40)(50,80)
\qbezier(90,0)(75,40)(90,80)
\qbezier(110,0)(125,40)(110,80)
\qbezier(50,60)(80,50)(110,40)

\multiput(200,0)(20,0){4}{\multiput(0,0)(0,20){5}{\circle*{5}}}

\multiput(200,0)(0,100){1}{\line(0,1){40}}
\multiput(220,20)(0,100){1}{\line(0,1){20}}
\multiput(240,40)(0,100){1}{\line(0,1){20}}
\multiput(260,40)(0,100){1}{\line(0,1){40}}

\multiput(200,00)(0,100){1}{\line(1,0){60}}
\multiput(220,20)(0,100){1}{\line(1,0){40}}
\multiput(220,40)(0,100){1}{\line(1,0){20}}
\multiput(200,60)(0,100){1}{\line(1,0){40}}
\multiput(200,80)(0,100){1}{\line(1,0){60}}

\qbezier(200,40)(230,20)(260,0)
\qbezier(200,60)(230,40)(260,20)
\qbezier(200,80)(230,60)(260,40)

\put(50,-35){{\sc Figure 7.} }
\end{picture}  \newline

\end{figure}

Figure 7 produces solutions for three successive vertices and $n=5$. An arbitrary number of columns may be added to any of the graphs so that we have solutions for $C_m \Box_\ell C_5$ for all $m\geq4$. 

It remains to look at the subcase when $m\geq4$ and $n\geq 6$. \newline

\begin{figure}[H]
\centering

\begin{picture}(200,140)(-80,-30)

\multiput(0,0)(20,0){4}{\multiput(0,0)(0,20){6}{\circle*{5}}}

\put(-15,60){z}
\put(-15,80){y}
\put(-15, 100){x}

\multiput(0,40)(0,100){1}{\line(0,1){20}}
\multiput(20,40)(0,100){1}{\line(0,1){40}}
\multiput(40,0)(0,100){1}{\line(0,1){20}}
\multiput(40,40)(0,100){1}{\line(0,1){60}}
\multiput(60,0)(0,100){1}{\line(0,1){20}}
\multiput(60,40)(0,100){1}{\line(0,1){20}}
\multiput(60,80)(0,100){1}{\line(0,1){20}}

\multiput(0,0)(0,100){1}{\line(1,0){20}}
\multiput(0,20)(0,100){1}{\line(1,0){40}}
\multiput(20,40)(0,100){1}{\line(1,0){20}}
\multiput(0,80)(0,100){1}{\line(1,0){20}}
\multiput(0,100)(0,100){1}{\line(1,0){20}}

\qbezier(0,0)(-15,50)(0,100)
\qbezier(20,0)(5,50)(20,100)
\qbezier(40,0)(55,50)(40,100)
\qbezier(60,0)(75,50)(60,100)

\qbezier(0,20)(30,35)(60,20)
\qbezier(0,40)(30,55)(60,40)
\qbezier(0,60)(30,75)(60,60)
\qbezier(0,80)(30,95)(60,80)

\put(0,-35){{\sc Figure 8.} }
\end{picture}  \newline

\end{figure}

Consider Figure 8 above. It is easy to see that we can add any number of rows between the bottom two rows. We can also add any number of columns between the last two columns. Since there are no edges between the last two columns and using Lemma \ref{L3.5} we have that $C_m \Box_\ell C_n$ separates $x,y$ and $z$ for all $m\geq 4$ and all $n\geq6$. This takes care of all cases and proves Theorem \ref{T3.8}. 

\end{proof}

\section{The special case}
In this section we examine the special case that the graph is isomorphic to $Y\Box K_2$, where $Y$ is the circulant graph of even order $n$ with connection set $\{\pm 1,n/2\}$.
For convenience and a more general understanding we define a generalisation of the psuedo-Cartesian product of two cycles by  
$C_m \Box_\tau C_n$, where $\tau \in S_n$ denotes a permutation. The graph is obtained by starting with the Cartesian product $P_m \Box C_n$ and adding edges from $u_{m-1,j}$ to $u_{0,\tau(j)}$.

It is easily verifiable that the special case graph is isomorphic to $C_m \Box_{\tau} C_4$, where $m\geq3$ and $\tau = (0\;3)(1\;2)$. Note that a convenient automorphism for this graph is $\rho$ which maps $u_{i,j}$ to $u_{i+1,j}$ for $i<m-1$, and $u_{m-1,j}$ to $u_{0,\tau(j)}$. Another convenient automorphism for this special case graph is $\alpha(u_{i,j}) = u_{i,3-j}$. 

\begin{theorem}\label{T4.1}
The graph $C_m \Box_{\tau} C_4$, where $\tau = (0\;3)(1\;2)$, is both 2-spanning and 3-spanning cyclable for $m\geq3$. 
\end{theorem}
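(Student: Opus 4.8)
The plan is to get 2-spanning cyclability essentially for free and to concentrate the work on a single subcase of 3-spanning cyclability. For the 2-spanning statement, note that $P_m\Box C_4$ is a spanning subgraph of $C_m\Box_\tau C_4$, the two graphs differing only in the wrap-around edges. Hence any 2-factor of $P_m\Box C_4$ separating a given pair of vertices is also such a 2-factor of $C_m\Box_\tau C_4$, and Theorem \ref{T3.4} (with $n=4$ and $m\ge 3$) supplies one for every pair. So $C_m\Box_\tau C_4$ is 2-spanning cyclable immediately.

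For the 3-spanning statement I would argue according to the number of distinct columns occupied by the three chosen vertices $x,y,z$. If they lie in three distinct columns I work inside the spanning subgraph $P_m\Box C_4$: cut the column-path into three consecutive blocks, each containing exactly one of $x,y,z$, and take a Hamilton cycle of each block (every block is some $P_\ell\Box C_4$, Hamiltonian by Lemma \ref{L3.3}); their union is the required 2-factor, valid for all $m\ge 3$. If they lie in two distinct columns, I first apply the automorphism $\rho$ to move the lone vertex into column $m-1$, take that column as a $C_4$-cycle, and separate the remaining pair inside the induced subgraph on columns $0,\dots,m-2$, which is exactly $P_{m-1}\Box C_4$ (the twist is not induced there). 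This subgraph is 2-spanning cyclable by Theorem \ref{T3.4} when $m\ge 4$, and for $m=3$ it is $P_2\Box C_4\cong Q_3$, which is 2-spanning cyclable.

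The only substantial case is when $x,y,z$ share a single column. Using $\rho$, $\alpha$, together with the additional automorphism $u_{i,j}\mapsto u_{i,j+2}$ (which one checks preserves the twist because $\tau(j)+2=\tau(j+2)$), every such triple can be normalised to $x,y,z=u_{0,0},u_{0,1},u_{0,2}$ in column $0$. Here the twist edges are genuinely needed: inside $P_m\Box C_4$ the end-column vertex $u_{0,0}$ has only the three neighbours $u_{0,1},u_{0,3},u_{1,0}$, so the cycle containing it must use the subpath $[u_{0,3},u_{0,0},u_{1,0}]$ (the neighbour $u_{0,1}$ being reserved for another cycle); then $u_{0,1}$ retains only the single usable neighbour $u_{1,1}$ and cannot lie on a cycle disjoint from the others, so no untwisted separating 2-factor exists. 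For the base case $m=3$ I would exhibit the three $4$-cycles $[u_{0,0},u_{1,0},u_{2,0},u_{2,3},u_{0,0}]$, $[u_{0,1},u_{1,1},u_{2,1},u_{2,2},u_{0,1}]$ and $[u_{0,2},u_{0,3},u_{1,3},u_{1,2},u_{0,2}]$, which use the twist edges $[u_{2,3},u_{0,0}]$ and $[u_{2,2},u_{0,1}]$, partition all twelve vertices, and separate $u_{0,0},u_{0,1},u_{0,2}$. To reach all $m\ge 3$ I then insert columns by the subdivision technique of this section: between columns $0$ and $1$ this 2-factor uses a horizontal edge in every one of the four rows, so subdividing all four simultaneously lengthens each cycle without changing their number, keeps the three special vertices in column $0$, and yields a separating 2-factor in $C_{m+1}\Box_\tau C_4$.

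The main obstacle is precisely this single-column subcase: one must recognise that the wrap-around twist has to be used, locate the base 2-factor, and verify that the column-insertion preserves both the cycle count and the separation. The other two cases are routine reductions to the already-established Hamiltonicity (Lemma \ref{L3.3}) and 2-spanning cyclability (Theorem \ref{T3.4}) of smaller products.
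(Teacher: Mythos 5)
Your proposal is correct, and its skeleton --- the three-way split on the number of occupied columns, with the three-column and two-column cases dispatched through Lemma \ref{L3.3} and Theorem \ref{T3.4} together with the column automorphism $\rho$ --- is the same as the paper's. The genuine difference is in the single-column case. The paper reduces via $\rho$ and $\alpha$ to \emph{two} triples, $\{u_{0,0},u_{0,1},u_{0,2}\}$ and $\{u_{0,0},u_{0,1},u_{0,3}\}$, and then exhibits one closed-form 2-factor for general $m$ whose three cycles traverse all four twist edges; since $u_{0,2}$ and $u_{0,3}$ lie on the same cycle of that 2-factor, it separates both triples simultaneously. You instead enlarge the automorphism group by the row shift $u_{i,j}\mapsto u_{i,j+2}$ (your check that $\tau(j)+2=\tau(j+2)$ for $\tau=(0\;3)(1\;2)$ is exactly what makes this legitimate), which cuts the casework down to the single triple $\{u_{0,0},u_{0,1},u_{0,2}\}$, and then you argue by a base case at $m=3$ followed by column insertion: because your three 4-cycles use all four horizontal edges between columns 0 and 1, subdividing all of them inserts a column while preserving both the number of cycles and the separation. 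Both routes are sound. The paper's buys an explicit 2-factor for every $m$ in one stroke (in fact its formula is already valid at $m=3$, though the paper defers that case to the reader), while yours buys a smaller verification burden --- one triple and one twelve-vertex base case --- and reuses the insertion technique the paper itself develops for the product-of-cycles section, so the passage to general $m$ is mechanical. Your added observation that no 2-factor of the untwisted $P_m\Box C_4$ can separate the triple (the vertex $u_{0,1}$ would be left with only the usable neighbour $u_{1,1}$) does not appear in the paper, and it is a clean explanation of why this case cannot simply be delegated to Theorem \ref{T3.4}.
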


\begin{proof}
Theorem \ref{T3.4} shows that the graph is 2-spanning cyclable for $m\geq 3$. For the 3-spanning case we have three cases. The first is when all three vertices lie in different columns and we leave the easy verification that there is a 2-factor separating the three vertices to the reader. 

The second case is when two vertices $x,y$ are in the same column and the other vertex $z$ is not. When $m=3$ it is easy to verify. Suppose that $m \geq4$. In this case we can use the automorphism $\rho$ repeatedly until $z$ is mapped to column 0. We can then use the column 0 cycle to contain that vertex and use Theorem \ref{T3.4} to separate the remaining vertices. 

This then leaves us with the third case in which all three vertices are in the same column. This of course means that all three vertices must be in different rows. Using the automorphism $\rho$ we can assume that all three vertices are located in column 0. Also, using the automorphism $\alpha$ we see that we only need to separate the  triplets $\{ u_{0,0},u_{0,1},u_{0,2}\} $ and $ \{ u_{0,0},u_{0,1},u_{0,3}\}$. It is easy to verify this for $m=3$ and it is left to prove it for $m\geq4$. 

We only need one 2-factor which consists of the cycles 
\begin{center}
$[u_{0,0},u_{1,0},u_{1,3},...,u_{m-1,3},u_{0,0}]$,
\end{center}

\begin{center}
$[u_{0,1},u_{1,1},u_{1,2},...,u_{m-1,2},u_{0,1} ]$, and 
\end{center}

\begin{center}
$[u_{0,3},u_{m-1,0},...,u_{2,0},u_{2,1},...,u_{m-1,1},u_{0,2},u_{0,3} ]$. 
\end{center}
 This 2-factor separates the triplets $\{ u_{0,0},u_{0,1},u_{0,2}\}$ and  $\{u_{0,0},u_{0,1},u_{0,3}\}$.  
\end{proof}

\section{The circulant case}
We will assume that the vertices on the graph  $\mathrm{circ(n; \pm 1, \pm s)}$ are cyclically labelled $u_i$, where $0 \leq i \leq n-1$, and where the index $i$ is computed modulo $n$. We also assume that $n \geq 2s$. We define the automorphism $\pi$ by $\pi(u_i) = u_{i+1}$. 

Let $P[x,y]$ denote the path 
$[u_x,u_{x+1},u_{x+2},\ldots,u_{y-1},u_y]$. Note that $P[x,x+1]$ is the edge $[u_x,u_{x+1}]$,
whereas the path $P[x+1,x]$ is a Hamilton path whose terminal vertices are $u_x$ and $u_{x+1}$. For $x-y$ even, we denote the path $[u_x, u_{x+2}, u_{x+4}, \ldots, y]$ by $P^2[x,y]$.  An important convention is that $P[x,x]$ and $P^2[x,x]$ both denote the single vertex $u_x$.

\begin{theorem}\label{T5.1} The graph $\mathrm{circ(n; \pm 1, \pm s)}$, where $s \geq 2$, $gcd(n,s)=1$ and $n \geq 2s$, is 2-spanning cyclable if and only if $n\geq 6$. 
\end{theorem}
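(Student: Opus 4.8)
The plan is to prove the two directions separately, the only-if direction being a short counting argument and the converse a constructive case analysis. For the only-if direction, I would first observe that the standing hypotheses $s\ge 2$, $\gcd(n,s)=1$ and $n\ge 2s$ force $n\ge 2s+1\ge 5$, so the unique realizable instance with $n<6$ is $(n,s)=(5,2)$. There each vertex $u_i$ is adjacent to $u_{i\pm 1}$ and $u_{i\pm 2}$, i.e.\ to all four other vertices, whence $\mathrm{circ}(5;\pm1,\pm2)\cong K_5$. Separating two vertices requires two vertex-disjoint cycles, hence at least $3+3=6$ vertices, which $K_5$ does not have; this is the same counting obstruction already used for $K_4$ and for small product graphs earlier in the paper. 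Thus no graph in the family with $n<6$ is 2-spanning cyclable.

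For the converse I would use the automorphism $\pi$ to place one prescribed vertex at $u_0$, and the reflection $u_i\mapsto u_{-i}$ (an automorphism because the connection set is symmetric) to assume the second vertex is $u_d$ with $1\le d\le \lfloor n/2\rfloor$. The goal is then a 2-factor consisting of exactly two cycles, one containing $u_0$ and the other $u_d$. The basic building blocks are the jump-$1$ subpaths $P[x,y]$ closed off by single jump-$s$ chords, and, when $s=2$, the jump-$2$ subpaths $P^2[x,y]$; the fundamental cycle is the $(s+1)$-cycle $[u_x,u_{x+1},\dots,u_{x+s},u_x]$, which spans an interval of $s+1$ consecutive vertices using one jump-$s$ edge.

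The generic construction is to split the vertex set into two arcs of consecutive vertices, one containing $u_0$ and the other $u_d$, and to span each arc by a cycle built from a jump-$1$ path closed by jump-$s$ chords. When $s=2$ every arc of length at least $3$ is cyclable (the square of a path is Hamiltonian), so for $d$ bounded away from $0$ one simply takes $\{u_0,u_1,u_2\}$ as one cycle and spans the complementary interval as the other. More generally I would establish a short interval-cyclability lemma describing exactly which arc lengths admit a spanning cycle, and then choose the two cut points so that both arcs have cyclable lengths and $u_0$, $u_d$ lie in different arcs.

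The remaining configurations are where essentially all the difficulty lies, and they are of two kinds. First, when $u_0$ and $u_d$ are close (roughly $d\le s$) the two arcs cannot both be long, so the two cycles must \emph{interleave} rather than be intervals; here I would exhibit explicit small 2-factors in the $P[\cdot,\cdot]$/$P^2[\cdot,\cdot]$ notation and then lengthen them by inserting blocks of consecutive vertices, in the spirit of the row-and-column insertion technique of the product-of-cycles case. Second, and most delicate, is a parity obstruction: when $s$ is odd the interval subgraph is bipartite on index parity, so a spanning cycle of an arc forces its length to be even, and when $n$ is also odd the two arcs cannot both be even. In that regime the pure interval approach breaks down entirely and one must use genuinely non-interval cycles (for example, for $\mathrm{circ}(7;\pm1,\pm3)$ a triangle together with a complementary $4$-cycle). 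Handling these parity-constrained and small-distance cases uniformly in $n$ and $s$, rather than the routine long-arc cases, is the main obstacle I anticipate.
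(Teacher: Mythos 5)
Your only-if direction is correct, and in fact slightly sharper than necessary: the hypotheses $s\ge 2$, $\gcd(n,s)=1$, $n\ge 2s$ do force $n\ge 2s+1$, so the only realizable instance with $n<6$ is $\mathrm{circ}(5;\pm1,\pm2)\cong K_5$, which the six-vertex count eliminates. The if direction, however, is a plan rather than a proof, and the plan stops exactly where the work lies. Your ``interval-cyclability lemma'' is never stated or proved, and it is more delicate than your sketch suggests: for $s=4$ an arc of six consecutive vertices has no spanning cycle (four of its six vertices have degree two, so all seven edges are forced, giving a fifth vertex degree three), so which arc lengths are cyclable is not governed by parity alone. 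More importantly, the two configurations you flag --- prescribed vertices at distance at most $s$, and the case where $s$ and $n$ are both odd so that no two-arc partition can work --- are precisely the ones you leave unresolved; your closing sentence concedes they are ``the main obstacle.'' Since for $s\ge 3$ every pair at distance less than $s$ lies in this unhandled regime, the proposal does not establish the theorem for any $s\ge 3$.

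The idea you are missing is that one need not use arc-shaped cycles at all, and then both obstacles evaporate. For $s\ge 3$ the paper exhibits a single 2-factor $F$ with cycles $[u_0,u_s]\cup P[s,n-s]\cup[u_{n-s},u_0]$ and $P[1,s-1]\cup[u_{s-1},u_{n-1}]\cup P[n-s+1,n-1]\cup[u_{n-s+1},u_1]$. The second cycle is a ``collar'' consisting of the $2(s-1)$ vertices flanking $u_0$ on both sides, stitched together by two jump-$s$ edges; the first cycle contains $u_0$ and all remaining vertices, and the standing inequality $n\ge 2s+1$ guarantees both cycles have length at least $3$. This one 2-factor separates $u_0$ from every vertex at distance at most $s-1$ --- exactly your hard small-distance case --- and then $F$, $\pi(F)$ and $\pi^{-1}(F)$ together separate $u_0$ from every other vertex, since their first cycles jointly sweep out $u_s,\ldots,u_{n-s}$ as well. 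Parity never enters because the collar is not an arc. For $s=2$ (where the collar would degenerate to two paths of length one joined into a $2$-cycle) the paper does essentially what you propose: a triangle $[u_{n-1},u_{n-2},u_{n-3},u_{n-1}]$ together with a spanning cycle of the complementary arc built from $P^2$- and $P$-paths, rotated by $\pi$. So your instinct is sound for $s=2$, but for $s\ge 3$ you would need to replace the deferred case analysis with something like this wrap-around construction.
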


\begin{proof}

Since there must be at least 2 cycles both of length at least 3 we have that $n\geq6$. 
 
 We first consider $s=2$. Note that in this case $n$ must be odd. We show that we can always separate $u_0$ from any vertex $u_i$ where $i\neq 0$. In this case we consider the 2-factor consisting of the cycles $[u_{n-1}, u_{n-2}, u_{n-3}, u_{n-1}]$ and $P^2[0,n-5] \cup P[n-5,n-4] \cup P^{2}[1,n-4] \cup P[0,1]$. Using this 2-factor and the automorphism $\pi$ we can separate $u_0$ from all $u_i$, where $i\neq0$. 

We now consider $s\geq 3$. We start by constructing a 2-factor $F$ consisting of the cycles $[u_0,u_s] \cup P[s,n-s] \cup [u_{n-s},u_0]$ and $P[1,s-1] \cup [u_{s-1},u_{n-1}] \cup P[n-s+1,n-1] \cup [u_{n-s+1},u_1]$. This 2-factor will separate $u_0$ from the vertices in the second cycle. To separate $u_0$ from all other vertices we consider $\pi(F)$ and $\pi^{-1}(F)$. 
\end{proof}

We now move onto the 3-spanning cyclability of circulant graphs. 

\begin{theorem}\label{T5.2}
The graph $\mathrm{circ(n; \pm 1, \pm 2)}$ is not 3-spanning cyclable. 
\end{theorem}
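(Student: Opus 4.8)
The plan is to exhibit a single triple of vertices that no 2-factor can separate, and the natural candidate is three consecutive vertices $u_0,u_1,u_2$. The whole argument rests on the rigidity of the local neighbourhoods in this graph: each vertex $u_i$ has exactly the four neighbours $u_{i-2},u_{i-1},u_{i+1},u_{i+2}$, and any 2-factor uses precisely two of the four edges incident with $u_i$. This severely limits the choices available at $u_0$, $u_1$ and $u_2$ once we demand that they lie in three different cycles.

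First I would suppose, for contradiction, that there is a 2-factor $F$ whose three cycles $C_0, C_1, C_2$ contain $u_0, u_1, u_2$ respectively. Examining $u_1$, the edges $[u_1,u_0]$ and $[u_1,u_2]$ cannot lie in $F$, since using either would force $u_0$ or $u_2$ into the cycle $C_1$. As $u_1$ needs two incident edges in $F$ and only $[u_1,u_{-1}]$ and $[u_1,u_3]$ remain, it is forced to use both; in particular $u_{-1}\in C_1$. Running the identical argument at $u_0$, the edges $[u_0,u_1]$ and $[u_0,u_2]$ are forbidden, so $u_0$ must use $[u_0,u_{-2}]$ and $[u_0,u_{-1}]$, which forces $u_{-1}\in C_0$. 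Since $C_0$ and $C_1$ are distinct cycles of the same 2-factor, the vertex $u_{-1}$ cannot belong to both, and this is the desired contradiction. Hence $u_0,u_1,u_2$ admit no separating 2-factor and the graph is not 3-spanning cyclable.

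The small things I would verify to make the forcing legitimate are that the vertices $u_{-2},u_{-1},u_0,u_1,u_2,u_3$ appearing in the argument are the genuine neighbours I claim (so that at each of $u_0$ and $u_1$ exactly two edges are blocked and exactly two remain) and that $u_{-1}$ is distinct from $u_0,u_1,u_2$; both hold because $\gcd(n,2)=1$ and $n\geq 2s=4$ force $n$ to be odd with $n\geq 5$. I do not anticipate a serious obstacle: the neighbourhood-rigidity argument covers every admissible $n$ uniformly, including the smallest cases $n=5,7$ (where one could alternatively just count vertices, since three disjoint cycles of length at least three need at least nine vertices), so no case analysis on $n$ is required. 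The only point worth stating explicitly is that the edge $[u_0,u_{-1}]$ forced at $u_0$ and the edge $[u_1,u_{-1}]$ forced at $u_1$ share the endpoint $u_{-1}$, which is exactly what produces the clash between $C_0$ and $C_1$.
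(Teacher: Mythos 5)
Your proof is correct and takes essentially the same approach as the paper: both arguments pick the consecutive triple $u_0,u_1,u_2$ and use the forced edges at these vertices to reach a contradiction, the only cosmetic difference being that the paper forces $u_1$'s edges and then starves $u_2$ of neighbours, while you force both $u_0$ and $u_1$ and observe that $u_{n-1}$ would have to lie in two distinct cycles. Your explicit check of the standing assumptions ($n$ odd, $n\geq 5$) is a nice touch but does not change the substance.
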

\begin{proof}
Consider separating the vertices $u_0, u_1$ and $u_2$. The vertex $u_1$ is forced to be  adjacent to $u_3$ in its cycle. The vertex $u_2$ cannot be adjacent to any of $u_0,u_1$ and $u_3$ and hence we have a contradiction. 
\end{proof}

\begin{theorem}\label{5.3}
The graph $\mathrm{circ(n; \pm 1, \pm s)}$, where $n=2s+1$ or $n=2s+2$, is not 3-spanning cyclable. 
\end{theorem}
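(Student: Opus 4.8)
The plan is to handle the two sizes $n=2s+1$ and $n=2s+2$ separately, because they fail to be 3-spanning cyclable for quite different reasons. Throughout I keep the standing assumptions $s\geq 2$, $\gcd(n,s)=1$ and $n\geq 2s$.

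For $n=2s+1$ I would reduce to a case already settled. Here $2s=n-1\equiv -1\pmod n$, and since $n$ is odd the map $x\mapsto 2x$ is an automorphism of $\mathbb{Z}/n\mathbb{Z}$. This group automorphism induces a Cayley-graph isomorphism carrying the connection set $\{\pm1,\pm s\}$ to $\{\pm 2,\pm 2s\}=\{\pm 1,\pm 2\}$, so $\mathrm{circ}(2s+1;\pm1,\pm s)\cong\mathrm{circ}(2s+1;\pm1,\pm 2)$. The conclusion is then immediate from Theorem \ref{T5.2}. The only points needing a sentence of justification are that multiplication by a unit of the group yields a Cayley-graph isomorphism (standard in the abelian setting) and that the resulting graph, being of odd order at least $5$, lies in the scope of Theorem \ref{T5.2}.

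For $n=2s+2$ no multiplier reduction is available (here $\gcd(n,s)=1$ forces $s$ odd, so $2$ is not a unit), and I would argue directly by forcing. Note first that $-s\equiv s+2\pmod n$. I would choose the three vertices $u_0,\,u_1,\,u_{s+1}$ together with the three-element set $T=\{u_{n-1},u_s,u_{s+2}\}$. A short neighbourhood computation shows that $u_0$ is adjacent exactly to $\{u_1\}\cup T$ and that $u_{s+1}$ is also adjacent exactly to $\{u_1\}\cup T$. In any 2-factor separating the chosen triple, neither the edge $[u_0,u_1]$ nor the edge $[u_1,u_{s+1}]$ may appear, since using one would place two chosen vertices on the same cycle. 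Hence both 2-factor edges at $u_0$ and both 2-factor edges at $u_{s+1}$ run into $T$. Each of $u_0$ and $u_{s+1}$ therefore uses exactly two of the three vertices of $T$, so the two chosen two-subsets of $T$ meet in some common vertex $t$; as $t$ has valency $2$ in the 2-factor, its two incident edges are precisely $[u_0,t]$ and $[u_{s+1},t]$, forcing $u_0$ and $u_{s+1}$ onto a common cycle and contradicting separation.

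The main obstacle is finding the right triple in the $n=2s+2$ case; once $u_0,u_1,u_{s+1}$ is chosen the argument is a clean pigeonhole, and it is the fact that $u_0$ and $u_{s+1}$ have the \emph{same} neighbourhood $\{u_1\}\cup T$ that drives the contradiction. The remaining routine checks I would include are that the six vertices $u_0,u_1,u_{s+1},u_{n-1},u_s,u_{s+2}$ are pairwise distinct, which holds for $s\geq 3$ (and $s\geq 3$ is guaranteed since $s$ must be odd), and the verification of the two neighbourhood identities using $-s\equiv s+2$. Together the two cases establish that $\mathrm{circ}(n;\pm1,\pm s)$ is not 3-spanning cyclable for $n=2s+1$ and $n=2s+2$.
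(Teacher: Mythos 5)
Your proof is correct, but both halves take genuinely different routes from the paper's. For $n=2s+1$ the paper argues directly on the triple $u_0,u_s,u_{n-s}$: since chosen vertices must lie on distinct cycles, the two 2-factor edges at $u_0$ and at $u_{n-s}=u_{s+1}$ are completely forced, and both forcings include the edge to $u_1$, which is impossible because $u_1$ has only two edges in a 2-factor. Your multiplier reduction $x\mapsto 2x$ (valid since $n$ is odd) carries $\{\pm 1,\pm s\}$ to $\{\pm 2,\pm 2s\}=\{\pm 1,\pm 2\}$ and hands the case to Theorem \ref{T5.2}; this is shorter and more conceptual---it shows the $n=2s+1$ graph literally is the $s=2$ circulant in disguise---at the price of invoking the standard fact that a group automorphism induces a Cayley-graph isomorphism, which the paper nowhere needs. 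For $n=2s+2$ the paper uses the same triple $u_0,u_1,u_{s+1}$ as you, but proceeds by sequential forcing: $u_1$'s edges are forced onto $u_2$ and $u_{s+3}$, a short case analysis eliminates the edge $[u_{s+1},u_{s+2}]$, so $u_{s+1}$ is forced onto $u_s$ and $u_{n-1}$, and then $u_0$ is left with only the single admissible neighbour $u_{s+2}$. Your argument replaces all of this with the single observation that $u_0$ and $u_{s+1}$ have the identical neighbourhood $\{u_1\}\cup T$ with $T=\{u_{n-1},u_s,u_{s+2}\}$, so each needs two 2-factor edges into the 3-set $T$, and by pigeonhole some $t\in T$ must carry an edge to each of them, placing $u_0$ and $u_{s+1}$ on a common cycle. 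This is cleaner and less error-prone than the chain of forcings, and it isolates the structural cause of failure (two of the chosen vertices are ``twins'' with equal neighbourhoods). Your side conditions are also in order: $\gcd(2s+2,s)=\gcd(2,s)=1$ forces $s$ odd, hence $s\geq 3$, which gives the distinctness of the six vertices involved.
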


\begin{proof}
We first look at the case $n=2s+1$. Consider separating the vertices $u_0, u_s$ and $u_{n-s}$. The vertex $u_0$ is forced to be adjacent to $u_1$ in its cycle. The vertex $u_{n-s}$ is also forced to be adjacent to $u_1$ in its cycle and we reach a contradiction. 

We now look at the case $n=2s+2$. 
Consider separating the vertices $u_0, u_1$ and $u_{s+1}$. The vertex $u_1$ can only be adjacent to $u_{2}$ and $u_{n -s +1}$ in its cycle. 

Suppose that vertex $u_{s+1}$ is adjacent to $u_{n-s}$ in its cycle. From that vertex it cannot move to vertex $u_{n -s +1}, u_0$ or $u_2$ and hence we reach a contradiction. So $u_{s+1}$ must only be adjacent to $u_{n-1}$ and $u_{s}$ in its cycle. 

Finally, $u_0$ can only be adjacent to $u_{n-s}$ and we reach a contradiction. 
\end{proof}

We now show that the graph $\mathrm{circ(n; \pm 1, \pm s)}$, where $s\geq3$ is 3-spanning cyclable for sufficiently large $n$. 

\begin{theorem}\label{T5.4}
The graph $\mathrm{circ(n; \pm 1, \pm s)}$, where $s\geq3$, is 3-spanning cyclable for $n\geq 4s+3$. 
\end{theorem}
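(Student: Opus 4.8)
The plan is to build a separating 2-factor explicitly, generalising the long-cycle-plus-fan construction used for Theorem \ref{T5.1}. First I would normalise the three given vertices. Using the rotation $\pi$ together with the reflection $u_i \mapsto u_{-i}$ (both automorphisms of $\mathrm{circ}(n;\pm 1,\pm s)$), I may assume the three vertices are $u_0$, $u_p$ and $u_q$ with $0 < p < q < n$, and record the three cyclic gaps $g_1 = p$, $g_2 = q-p$ and $g_3 = n-q$, where $g_1+g_2+g_3 = n \geq 4s+3$. The whole argument is then organised by the sizes of these gaps, exactly paralleling the three cases (vertices spread out, two of them close, all three close) that appear in the product-of-cycles sections.

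The key building blocks are short cycles on consecutive vertices together with one long flexible cycle. On any block of $s+1$ consecutive vertices $u_a,\ldots,u_{a+s}$ the path $P[a,a+s]$ closes with the jump-$s$ edge $[u_{a+s},u_a]$ into an $(s+1)$-cycle. On a block of $2s$ consecutive vertices one has the ``rectangle'' cycle $[u_a,u_{a+1},\ldots,u_{a+s-1},u_{a+2s-1},u_{a+2s-2},\ldots,u_{a+s},u_a]$, whose two jump-$s$ edges $[u_{a+s-1},u_{a+2s-1}]$ and $[u_{a+s},u_a]$ close it up. Moreover such a block cycle can be lengthened by two consecutive vertices at a time, so that I can realise interval cycles of every length in an arithmetic progression with common difference $2$ starting from $s+1$ (and, when $s$ is even, of both parities). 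Finally, the long cycle from Theorem \ref{T5.1}, namely $[u_0,u_s]\cup P[s,n-s]\cup[u_{n-s},u_0]$ with its companion fan $P[1,s-1]\cup[u_{s-1},u_{n-1}]\cup P[n-s+1,n-1]\cup[u_{n-s+1},u_1]$, shows how a single vertex can be isolated from an adjacent vertex using jump-$s$ edges; this fan mechanism is what I would use whenever two of the special vertices are too close to sit in separate interval cycles.

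With these pieces I would assemble the 2-factor by cutting the cycle into three consecutive arcs, one containing each special vertex, and realising each arc as an interval cycle whose length matches its arc; the leftover length (there is at least $n-3(s+1)\geq s$ of it, since $n\geq 4s+3$) is absorbed into one arc by the lengthening operation, and a rotation by $\pi$ places the wrap-around arc wherever it is convenient. When a gap is at least $s+1$ this is immediate; when a gap is smaller than $s+1$ the corresponding two special vertices cannot be separated by two disjoint interval cycles, and there I would instead peel each of the two close vertices off in opposite directions along jump-$s$ edges, exactly as the fan of Theorem \ref{T5.1} peels $u_0$ away from $u_1$, leaving the remaining vertices as a single long cycle through the third vertex.

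The main obstacle is twofold. First is parity: when $s$ is odd and $n$ is even the graph is bipartite (every edge joins indices of opposite parity), so every cycle in any 2-factor has even length and the three cycle lengths must be chosen even with even sum; the building blocks are compatible with this (the $(s+1)$-cycle and the rectangle both have even length when $s$ is odd), but the bookkeeping must be carried out case by case. Second, and more delicate, are the tight clusters in which two or even all three special vertices lie within distance $s$ of one another: here the clean interval-cycle decomposition fails and one must give explicit fan-type constructions, verifying that the three resulting cycles are vertex-disjoint, spanning, and each of length at least $3$. This is precisely where the hypothesis $n\geq 4s+3$ is needed, since Theorem \ref{5.3} shows the statement already fails at $n=2s+1$ and $n=2s+2$; the extra room guarantees that after peeling off the two small fan cycles enough vertices remain to close up the third cycle.
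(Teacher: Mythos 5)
Your proposal takes a different route from the paper (which surrounds two of the target vertices by the 4-cycles $C[x]=[u_x,u_{x+1},u_{x+s+1},u_{x+s},u_x]$ and completes the 2-factor with a single long cycle through all remaining vertices), but as it stands it has a genuine gap: a parity obstruction kills your main construction. Your primary assembly partitions the vertex set into three arcs of consecutive vertices and realises each arc as a spanning cycle of the subgraph induced on that arc. When $s$ is odd, the subgraph induced on any arc (no wrap-around) is bipartite, since a step of $1$ and a step of $s$ both change the parity of the index; hence every interval cycle has even length, and three of them can only span an even number of vertices. So for $s$ odd and $n$ odd (for example $\mathrm{circ}(17;\pm 1,\pm 3)$ with targets $u_0,u_6,u_{12}$, all gaps at least $s+1$) the three-arc decomposition you call ``immediate'' does not exist, no matter how the cut points are chosen. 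You do flag bipartiteness as the main obstacle, but you analyse the wrong case: $s$ odd with $n$ even is harmless (three even arc lengths can sum to $n$); it is $s$ odd with $n$ odd that is fatal to the interval approach, and your fan fallback is only invoked when two targets are within distance $s$, so it does not rescue this configuration.

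A second, lesser gap: the tight-cluster cases (two or three targets within distance $s$ of one another) are exactly where the real work lies, and your proposal defers them entirely to unspecified ``explicit fan-type constructions'' whose disjointness, spanning property and cycle lengths are left to be ``verified''. The paper's own proof spends essentially all of its effort on precisely these configurations, handling them with the compact non-interval 4-cycles $C[x]$ (which exist around any vertex regardless of parity) together with a case analysis for the single completion cycle; that device also sidesteps the parity problem above, because the long completion cycle is not confined to an interval. To repair your argument you would need (i) a replacement for the three-interval decomposition when $s$ and $n$ are both odd, for instance allowing one of the three cycles to be a non-interval cycle such as $C[x]$, and (ii) the actual constructions for clustered targets, with the verifications carried out rather than asserted.
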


\begin{proof}
We assume $s\geq 3$ because of Theorem \ref{T5.2}. Let $C[x]$ be the 4-cycle 
$[u_x,u_{x+1},u_{x+s+1},u_{x+s},u_x]$. 
We now show that any three distinct vertices can be separated when $n\geq 4s+3$. Without loss
of generality suppose that one of the three vertices is $u_0$. Let the other two vertices be 
$u_i$ and $u_j$, where $n-1\geq j>i\geq 1$. Because of the automorphism interchanging the
vertices $u_k$ and $u_{n-k}$, $0\leq k\leq n/2$, we may assume that $n-j\geq i$. Further, we
may assume $j-i\geq i$ because we can cyclically relabel the vertices by subtracting $i$ from
each subscript. There are three cases to consider.

The strategy in all the cases is the same.  We choose two 4-cycles each of which contains one
of the target vertices and then we find a third cycle containing the remaining target vertex 
and the rest of the vertices, thereby yielding a 2-factor separating the three vertices.  
We call this {\it 2-factor completion} and provide details in the first case below.  

The first case is $i>s$ and we break this into two subcases.  When $i=s+1$ or $i=s+2$,
use the 4-cycle $C[1]$ containing $u_i$ and the 4-cycle $C[j-1]$ containing $u_j$. To form
the cycle containing $u_0$ which completes a 2-factor, use the paths $P[3,s]$, $P[s+3,j-2]$, 
$P[j+1,j-2+s]$ and $P[j+s+1,0]$ joined by the edges $[u_0,u_s],[u_3,u_{s+3}],[u_{j-2},
u_{j-2+s}]\mbox{ and }[u_{j+1},u_{j+s+1}]$.  It is straightforward to verify that the 
vertices required for the latter cycle are available.
When $i>s+2$, use the 4-cycle $C[n-s]$ containg $u_0$ and the 4-cycle $C[i-s]$ containing
$u_i$.  Complete to a 2-factor as in the preceding subcase.

The second case is $i=s$ and we also break this into three subcases.  When $j\in\{2s,2s+1\}$,
we use the 4-cycle $C[2s]$ containing $u_j$ and the 4-cycle $C[n-s]$ containing $u_0$.  The
2-factor completion is straightforward.  When $3s+2\geq j\geq 2s+2$, we use the 4-cycle 
$C[s-1]$ containing $u_i$ and the 4-cycle $C[j]$ containing $u_j$.  The 2-factor completion
again is straightforward. Finally, when $j\geq 3s+3$, we use the 4-cycle $C[s-1]$ as before 
and the 4-cycle $C[j-s]$ containing $u_j$.  Perform the 2-factor completion as before.

The last case is $i<s$ and there are four subcases. If $j\geq 3s+3$, then use the 4-cycle
$C[j-s-1]$ containing $u_j$ and the 4-cycle $C[i]$ containing $u_i$.  Carry out the usual
2-factor completion to finish this subcase.  If $2s+2\leq j\leq 3s+2$, then use the 4-cycle
$C[i]$ containing $u_i$. If $j = 3s+2$ we use the 4-cycle $C[j-1]$ to contain $u_j$ otherwise we use $C[j]$. The 2-factor completion
is straightforward.  Note that this subcase requires that $n\geq 4s+3$.

When $j\in\{2s,2s+1\}$, use the 4-cycle $C[2s]$ containing $u_j$.  There are two subcases.
When $i=1$, use the 4-cycle $C[1]$ when $s>3$, and $C[n-s+1]$ when $s=3$, to contain $u_i$, but when $i>1$, use the 4-cycle $C[n-s]$
containing $u_0$. In both situations it is easy to carry out a 2-factor completion.

The final subcase is $i<j<2s$.  In this subcase we use the 4-cycle $C[j]$ containing $u_j$
and the 4-cycle $C[n-s-1]$ containing $u_0$.  The completion to a 2-factor is straightforward.
This completes the proof.
\end{proof}


\begin{thebibliography}{9999}

\bibitem{A1} B. Alspach, A. Joshi, On The 2-Spanning Cyclability Of Honeycomb
Toroidal Graphs, (arXiv:2309.04938 [math.CO]). 

\bibitem{L1} C.-K. Lin, J.-M. Tan, L.-H. Hsu and T.-L. Kung, Disjoint cycles in hypercubes with prescribed vertices in each cycle, {\sl Discrete Appl. Math.} {\bf 161} (2013), 2992--3004.

\bibitem{Q1} H. Qiao, E. Sabir and J. Meng, The spanning cyclability of Cayley graphs generated by transposition trees, {\sl Discrete Appl. Math.} {\bf 328} (2023), 60--69.


\bibitem{Y1} M.-C. Yang, L.-H. Hsu, C.-N. Hung and E. Cheng, 2-Spanning cyclability problems of some generalized Petersen graphs,  {\sl Discuss. Math. Graph Theory} {\bf 40} (2020), 713--731.


\end{thebibliography}
\end{document}